\newenvironment{nouppercase}{%
  \renewcommand{\uppercasenonmath}[1]{}}{}
\title[\Large Localization Coefficients of Functions]{ \Large{Localization Coefficients of Functions with \\Applications in Partial Differential Equations}  }
\author{Mirza Karamehmedovi\'c$^\star$}
\thanks{$\star$ Department of Applied Mathematics and Computer Science, Technical University of Denmark, Kgs. Lyngby, Denmark (mika@dtu.dk).}
\author{Faouzi Triki$^\ddag$}
\thanks{$\ddag$ Laboratoire Jean Kuntzmann, Université Grenoble-Alpes, Grenoble, France (Faouzi.Triki@univ-grenoble-alpes.fr).}
\date{}
\newcommand{\R}{\bm{R}}
\newcommand{\N}{\bm{N}}
\newcommand{\Hdot}{\dot H}
\newtheorem{lemma}{Lemma}
\newtheorem{definition}{Definition}
\newtheorem{proposition}{Proposition}
\newtheorem{remark}{Remark}
\begin{document}
\begin{nouppercase}
\maketitle
\end{nouppercase}

\begin{abstract}
We identify shortcomings in two popular measures of localization of functions: the \(L^p\text{-}L^q\) participation ratio and the mass concentration comparison. We then introduce a novel localization measure for functions on bounded subsets of \(\bm{R}^d\), $d=1,2,3,\dots$, based on a Wasserstein metric. For efficient computation, we prove the equality of this measure with a suitable Sobolev norm in dimension one. We demonstrate our approach by numerical experiments in one and two dimensions. Finally, we discuss and mitigate challenges arising from boundary effects.

\end{abstract}

\section{Introduction}


Localization is a phenomenon in solutions of differential equations across quantum mechanics, electrodynamics, acoustics, and other fields, where certain eigenfunctions become concentrated in small regions of the domain while remaining nearly zero elsewhere.~\cite{Filoche-2012-2,Filoche-2012,Heilman-2010,Yamilov-2023,Felix-2007,burq2024delocalized,Vazquez-1982,2024-localization_1D,ammari2024anderson} This phenomenon often turns out to be of technological as well as scientific interest. In addition to the well-known example of Anderson localization of electromagnetic waves in disordered media~\cite{Yamilov-2023}, photonic nanojets~\cite{Darafsheh-2021,2022-PNJ1,2023-phase-only_PNJ} likewise exemplify a technologically promising phenomenon rooted in wave localization. Finally, one can certainly also study the localization of functions without an inherent connection to a differential equation.

Despite the extensive interest in localization, a rigorously justified, robust, and universal method for measuring it remains elusive. Consequently, characterizing localization-driven phenomena—such as defining the size of a photonic nanojet—often involves some degree of arbitrariness. Localization can be measured using mass concentration comparison~\cite{hardy1929some,kesavan2006symmetrization}. Here, the pointwise information on the given functions is compounded into monotonically decaying 'rearrangements' whose local decay properties provide insight into the degree of localization. Next, a substantial body of literature~\cite{Filoche-2012,Grebenkov-2013} quantifies the localization of functions in $L^p(\Omega)\cap L^q(\Omega)$, $\Omega\subseteq\R^d$, $d\in\N$, in terms of their participation ratios
\[
\alpha_{p,q}(u,\Omega)=|\Omega|^{1/q-1/p}\frac{\|u\|_p}{\|u\|_q},\quad 1\le p<q\le\infty.
\]
Specifically, the localization lemma~\cite[Lemma 3]{vandenBerg-2021} establishes the equivalence between the localization of a sequence $(u_n)_{n\in\N}$ of functions on a sequence $(\Omega_n)_{n\in\N}$ of sets, in the sense of~\cite{vandenBerg-2021-2}, and the property \[
\lim_{n\rightarrow\infty}\alpha_{1,2}(u_n,\Omega_n)=0.
\]
For a fixed domain $\Omega$, the parameter $\alpha_{2,4}(u,\Omega)$ is known empirically~\cite{2024-localization_1D} to attain relatively small values when $u$ is highly localized in $\Omega$, and indeed $\alpha_{2,4}(u,\Omega)$ is a commonly used measure of localization~\cite{Filoche-2012,Felix-2007}. In~\cite{2024-localization_1D} we provided asymptotic and non-asymptotic bounds on $\alpha_{2,4}(\phi_n,I)$ for eigenfunctions $\phi_n$, $n\in\N$, of regular Sturm-Liouville operators on an interval $I\subset\R$, confirming several expected connections between the value of that localization coefficient and the eigenfunction index $n$. However, any participation ratio $\alpha_{p,q}(u,\Omega)$ involves only the Lebesgue norms of the function $u$, and in Section~\ref{sec:mass_Lebesgue} we use the concept of rearrangement to illustrate the loss of information involved in such strictly Lebesgue norm-based localization coefficients, including the mass concentration comparison approach. Finally, to our knowledge, the use of $\alpha_{2,4}(u,\Omega)$ has so far not been motivated formally in the literature in the general case.

Another common way to measure the concentration of a probability density $u$ on $\Omega$ is in terms of its variance
\[
\sigma^2(f)=\int_{\Omega}|x-x_{{\rm cm}(u)}|^2u(x)dx,
\]
with $x_{{\rm cm}(u)}$ the center of mass of $u$,
\[
x_{{\rm cm}(u)}=\frac{1}{|\Omega|}\int_{\Omega}xu(x)dx.
\]
The variance is in fact the square of the Wasserstein-2 distance of the multiplicative measure $\mu_u(x)=u(x)dx$ on $\Omega$ from the maximally localized, degenerate probability density $\delta_{{\rm cm}(u)}$ on $\Omega$, which in turn is an intuitive, optimal mass transport-related measure of localization. However, the involvement of the Dirac delta makes this approach less favourable when maximizing the localization within sets of non-degenerate probability densities. It also makes it incompatible with an equivalent PDE-based interpretation of the Wasserstein-2 distance that offers substantial numerical advantages, as explained in detail below.

Our purpose here is
\begin{enumerate}
    \item to demonstrate a deficiency of measures of localization based on the $L^p-L^q$ participation ratio, or on the comparison of mass concentrations;
    \item to provide an intuitively and formally justified, robust, computationally efficient definition of localization, by taking as the starting point the Wasserstein-2 distance between nondegenerate probability densities, and
    \item to support the above developments using relevant numerical examples. 
\end{enumerate} 
In Section~\ref{sec:mass_Lebesgue} we discuss comparisons of mass concentrations as a means to measure localization, and we use the notions developed there to demonstrate a drawback of Lebesgue norm-based localization coefficients. Then, in Section~\ref{sec:new} we introduce our new localization coefficient $\beta(u,\Omega)$, which is based on the Wasserstein-2 distance between the probability distribution $u$ on $\Omega$ and the least localized probability distribution $|\Omega|^{-1}$ on $\Omega$. This leads naturally to the definition of localization in terms of a homogeneous Sobolev space distance between $u$ and $|\Omega|^{-1}$. In Section~\ref{sec:new}, we also show and address a boundary effect distorting localization measurements and occurring when some of the 'mass' of a probability density on $\Omega$ is close to the boundary of $\Omega$. This effect has deep consequences for the general definition of localization. In Section~\ref{sec:numerical} we showcase the performance of $\beta(u,\Omega)$ using numerical examples in dimension one and two, highlighting the advantages of our localization coefficient.

\section{Mass concentration comparisons and\\Lebesgue norm-based localization coefficients}\label{sec:mass_Lebesgue}
In this section we measure localization in terms of mass concentration comparisons. Assume $d\in\N=\{1,2,\dots\}$, $\Omega\subseteq\R^d$ is an open set, $|\cdot|$ is the Lebesgue measure on $\R^d$, and all considered functions $u,v$ are real-valued. Recall from, e.g.,~\cite[pp. 1-5]{kesavan2006symmetrization}, that the distribution function $\mu_u$ of $u$ is given by
\[
\mu_u(k)=|\{x\in\Omega,\,\,|u(x)|>k\}|,\quad k\ge0,
\]
the decreasing rearrangement of $u$ is given by
\[
u^{\ast}(s)=\sup\{k\ge0,\,\,\mu_u(k)>s\},\quad s\in(0,|\Omega|),
\]
and the spherical decreasing rearrangement of $u$ is given by
\[
u^{\#}(x)=u^{\ast}(|B_1(0)||x|^n),\quad x\in\Omega^{\#},
\]
where $B_1(0)$ is the unit ball in $\R^d$ centered at the origin, and $\Omega^{\#}$ is the ball in $\R^d$ centered at the origin and satisfying $|\Omega^{\#}|=|\Omega|$.
The following are now well-known facts~\cite[pp. 1-5]{kesavan2006symmetrization}:
\begin{enumerate}[(i)]
\item $\|u\|_{L^p(\Omega)}=\|u^{\ast}\|_{L^p(0,|\Omega|)}=\|u^{\#}\|_{L^p(\Omega^{\#})}$ for all $p\in[1,\infty]$,
\item $u^{\ast}$ is the generalized inverse of $\mu_u$, and
\item $\mu_u=\mu_{u^{\ast}}$.
\end{enumerate}
\begin{definition}[Comparison of mass concentrations]\label{def:1}
Let $u, v\in L^{\infty}(\Omega)\setminus\{0\}$. We say that $u$ is less concentrated than $v$, and we write $u \prec v$, if
\begin{equation}\label{eqn:mass}
\int_0^{|\Omega|}(v^{\bigstar}-u^{\bigstar})_+dr\le\int_0^{|\Omega|}(u^{\bigstar}-v^{\bigstar})_+dr,
\end{equation}
where $u^{\bigstar}=u^{\ast}/\|u\|_{L^{\infty}(\Omega)}$, $v^{\bigstar}=v^{\ast}/\|v\|_{L^{\infty}(\Omega)}$, and $h_+=\max\{h,0\}$ for any real-valued $h\in L^{\infty}([0,|\Omega|])$.
\end{definition}
The relation $\prec$ defines a total ordering on $L^{\infty}(\Omega)\setminus\{0\}$ up to some equivalence class. Indeed, given $u,v\in L^{\infty}(\Omega)\setminus\{0\}$, the functions $u^{\bigstar},v^{\bigstar}$ are nonnegative-valued, and for any real-valued $h\in L^{\infty}([0,|\Omega|])$ we have
\[
h_+(r)-(-h)_+(r)=\max\{h(r),0\}-\max\{-h(r),0\}=h(r),\quad r\in[0,|\Omega|],
\]
so the inequality~\eqref{eqn:mass} is equivalent with
\[
\|v^{\bigstar}\|_{L^1([0,|\Omega|])}\le\|u^{\bigstar}\|_{L^1([0,|\Omega|])}.
\]
Thus, for any $u,v,h\in L^{\infty}(\Omega)\setminus\{0\}$ we have 
\begin{enumerate}
    \item $u\prec v$ or $u\prec v$,
    \item $u\prec u$,
    \item $u\prec v$ and $v\prec h$ implies $u\prec h$.
\end{enumerate}
Finally, we can have pointwise different $u,v\in L^{\infty}(\Omega)\setminus\{0\}$ satisfying $u\prec v$ and $v\prec u$.

We can in fact simplify our definition of the ordering $\prec$:
\begin{proposition}
    Let $u, \, v \in  L^\infty(\Omega)\setminus\{0\}$. Then $u \prec v$ if and only if $\alpha_{1,\infty}(v,\Omega) \leq \alpha_{1,\infty}(u,\Omega)$. 
\end{proposition}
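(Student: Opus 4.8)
The plan is to reduce both directions of the equivalence to a comparison of the single scalar ratio $\|u\|_{L^1(\Omega)}/\|u\|_{L^\infty(\Omega)}$, using only facts already recorded above the statement.

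First I would invoke the reformulation of $\prec$ obtained just before the proposition: for $u,v\in L^\infty(\Omega)\setminus\{0\}$ the defining inequality~\eqref{eqn:mass} is equivalent to $\|v^{\bigstar}\|_{L^1([0,|\Omega|])}\le\|u^{\bigstar}\|_{L^1([0,|\Omega|])}$, the mechanism being that $h_+-(-h)_+=h$ pointwise converts a difference of positive parts into a difference of $L^1$ norms. Then I would simplify each side: since $u^{\bigstar}=u^{\ast}/\|u\|_{L^\infty(\Omega)}$ with $\|u\|_{L^\infty(\Omega)}$ a positive constant, and since $\|u^{\ast}\|_{L^1([0,|\Omega|])}=\|u\|_{L^1(\Omega)}$ by fact (i), we obtain $\|u^{\bigstar}\|_{L^1([0,|\Omega|])}=\|u\|_{L^1(\Omega)}/\|u\|_{L^\infty(\Omega)}$, and symmetrically for $v$.

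Combining these, $u\prec v$ holds exactly when $\|v\|_{L^1(\Omega)}/\|v\|_{L^\infty(\Omega)}\le\|u\|_{L^1(\Omega)}/\|u\|_{L^\infty(\Omega)}$. Multiplying through by the fixed positive factor $|\Omega|^{-1}$ and recalling that the $p=1$, $q=\infty$ case of the participation ratio reads $\alpha_{1,\infty}(w,\Omega)=|\Omega|^{-1}\|w\|_{L^1(\Omega)}/\|w\|_{L^\infty(\Omega)}$, this last inequality is precisely $\alpha_{1,\infty}(v,\Omega)\le\alpha_{1,\infty}(u,\Omega)$; since every step is an equivalence, both implications follow at once.

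I do not anticipate a real obstacle; the one point deserving a remark is the implicit finiteness. One needs $0<|\Omega|<\infty$ so that the decreasing rearrangement is defined on a bounded interval and $|\Omega|^{-1}$ is a genuine positive scalar, and $\|u\|_{L^1(\Omega)}<\infty$ so that the ratios and $\alpha_{1,\infty}(\cdot,\Omega)$ are well defined; both are ensured by the standing hypothesis that $\Omega$ is a bounded subset of $\R^d$, under which $L^\infty(\Omega)\subset L^1(\Omega)$.
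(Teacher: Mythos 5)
Your proof is correct and follows essentially the same route as the paper's: both reduce $u\prec v$ to the inequality $\|v^{\bigstar}\|_{L^1([0,|\Omega|])}\le\|u^{\bigstar}\|_{L^1([0,|\Omega|])}$ established just before the proposition, and then identify these quantities with $\|\cdot\|_{L^1(\Omega)}/\|\cdot\|_{L^\infty(\Omega)}$ via the equimeasurability fact (i), hence with $\alpha_{1,\infty}$ up to the factor $|\Omega|^{-1}$. Your direct computation merely replaces the paper's ``without loss of generality $\|u\|_{L^\infty(\Omega)}=\|v\|_{L^\infty(\Omega)}$'' normalization, which is an inessential difference.
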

\begin{proof}
Since  
\[
\|u\|_{L^p(\Omega)} = \|u^*\|_{L^p(0, |\Omega|)}\quad\text{and}\quad \|v\|_{L^p(\Omega)} = \|v^*\|_{L^p(0, |\Omega|)},\quad p \in 
\N^* \cup \{\infty\},
\]
we have 
$\alpha_{1,\infty}(v,\Omega)= \alpha_{1,\infty}(v^\bigstar,[0,|\Omega|])$ and $\alpha_{1,\infty}(f,\Omega) = \alpha_{1,\infty}(f^\bigstar,[0,|\Omega|])$. Assume, without loss of generality, that $ \|u\|_{L^\infty(\Omega)} = \|v\|_{L^\infty(\Omega)}$. Now $\alpha_{1,\infty}(v,\Omega) \leq \alpha_{1,\infty}(u,\Omega)$ is equivalent with
\[
\|v\|_{L^{\infty}(\Omega)}\|v^{\bigstar}\|_{L^1([0,|\Omega|])}=\|v^*\|_{L^1([0,|\Omega|])}\leq\|u^*\|_{L^1([0,|\Omega|])}=\|u\|_{L^{\infty}(\Omega)}\|u^{\bigstar}\|_{L^1([0,|\Omega|])},
\]
that is, with
\begin{equation}\label{eqn:p1}
\|v^{\bigstar}\|_{L^1([0,|\Omega|])}\le\|u^{\bigstar}\|_{L^1([0,|\Omega|])},
\end{equation}
and we have already shown the equivalence of~\eqref{eqn:p1} with $u\prec v$.
\end{proof}

V\'asquez and Volzone~\cite[Definition 2.1]{Vazquez-2014} define $u$ as less concentrated than $v$ if
\[
\int_{B_R(0)}u^{\#}(x)dx\le\int_{B_R(0)}v^{\#}(x)dx
\]
for all $R\in(0,|\Omega|)$, where $B_R(0)$ is the ball in $\R^n$ centered at the origin and of radius $R$. This is the so-called comparison of mass concentrations, and our Definition~\ref{def:1} can be seen as a special case where $R=|\Omega|$. However, the comparison of mass concentrations involves only Lebesgue norm information about $u$ and $v$, and is therefore susceptible to the same kind of issues as now discussed for $\alpha_{p,q}(u,\Omega)$, or any strictly $L^p$ norm-based measure of localization:
\begin{proposition}\label{prop:lp}
If $\mu_{u_1}\equiv\mu_{u_2}$ then $\|u_1\|_p=\|u_2\|_p$ for all $p\in[1,\infty]$.
\end{proposition}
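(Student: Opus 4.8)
The plan is to show that, for each fixed $p\in[1,\infty]$, the norm $\|u\|_{L^p(\Omega)}$ depends on $u$ only through its distribution function $\mu_u$; the hypothesis $\mu_{u_1}\equiv\mu_{u_2}$ then closes the argument at once. The quickest route uses the machinery already recalled above: by fact (ii), $u^{\ast}$ is the generalized inverse of $\mu_u$, so $\mu_{u_1}\equiv\mu_{u_2}$ forces $u_1^{\ast}\equiv u_2^{\ast}$ on $(0,|\Omega|)$, and then fact (i) gives
\[
\|u_1\|_{L^p(\Omega)}=\|u_1^{\ast}\|_{L^p(0,|\Omega|)}=\|u_2^{\ast}\|_{L^p(0,|\Omega|)}=\|u_2\|_{L^p(\Omega)}
\]
for every $p\in[1,\infty]$ simultaneously.

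If one prefers an argument that does not go through the rearrangement, I would instead use the layer-cake (Cavalieri) identity for $p\in[1,\infty)$: writing $|u(x)|^p=\int_0^{\infty}p\,k^{p-1}\chi_{\{|u(x)|>k\}}\,dk$ and applying Tonelli's theorem on $\Omega\times(0,\infty)$ yields
\[
\|u\|_{L^p(\Omega)}^p=\int_{\Omega}|u(x)|^p\,dx=\int_0^{\infty}p\,k^{p-1}\,\mu_u(k)\,dk,
\]
whose right-hand side is manifestly a functional of $\mu_u$ alone. The endpoint $p=\infty$ must be treated separately, since the integral formula is vacuous there: one uses $\|u\|_{L^{\infty}(\Omega)}=\inf\{k\ge0:\mu_u(k)=0\}$, which is finite because $u\in L^{\infty}(\Omega)$ and is again determined by $\mu_u$, so the two essential suprema coincide.

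There is no genuine obstacle here; the proof is essentially a bookkeeping exercise. The only steps deserving a line of care are the invocation of Tonelli's theorem in the layer-cake identity (or, in the alternative route, the precise sense in which $u^{\ast}$ inverts $\mu_u$) and the separate handling of $p=\infty$. Both are entirely standard.
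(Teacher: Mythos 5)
Your first argument is exactly the paper's proof: equality of distribution functions forces $u_1^{\ast}=u_2^{\ast}$ via the generalized-inverse fact, and the equimeasurability identity $\|u_j\|_p=\|u_j^{\ast}\|_p$ finishes it for all $p\in[1,\infty]$. The layer-cake alternative you sketch is a valid standalone route, but the main line of your proposal coincides with the paper's, so nothing further is needed.
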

\begin{proof}
Since $u_j^{\ast}$ is the generalized inverse of $\mu_{u_j}$, we have $u_1^{\ast}=u_2^{\ast}$, and the result now follows from $\|u_j\|_p=\|u^{\ast}_j\|_p$.
\end{proof}
As a simple example, we consider a family
\begin{equation}\label{eqn:uad}
u_{a,d}=\widetilde u_{a,d}/\|\widetilde u_{a,d}\|_{L^1(0,1)}
\end{equation}
of superpositions of two step functions, with
\begin{equation}\label{eqn:j1}
\widetilde u_{a,d}(x)=\varphi(x-a)+\varphi(x-a-d),\quad x\in(0,1),
\end{equation}
and
\begin{equation}\label{eqn:j2}
\varphi(x)=\begin{cases}1,\quad&|x|\le b,\\0,\quad&|x|>b,\end{cases}
\end{equation}
as illustrated in Figure~\ref{fig:alpha_fail} (left). Clearly, for sufficiently small $b$ and sufficiently large $d_1,d_2$, we have $\mu_{u_{a,d_1}}\equiv\mu_{u_{a,d_2}}$, and the localization coefficient $\alpha_{2,4}(u,\Omega)$ is therefore unable to discern $u_{a,d_1}$ from $u_{a,d_2}$ in such cases, although $u_{a,d_1}$ is more localized than $u_{a,d_2}$ when $0\le d_1<d_2$. This example also illustrates that \textit{any} measurement of localization based on $\mu_u$ alone generally fails.

\begin{figure}[hbt!]
    \centering
    \includegraphics[width=0.49\linewidth]{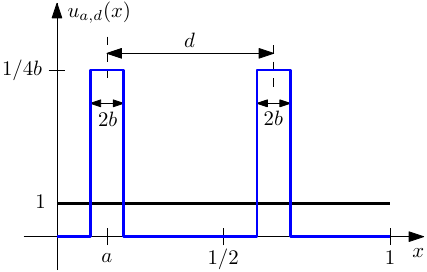}
    \caption{The distribution function $\mu_{u_{a,d}}$ of $u_{a,d}$ from~\eqref{eqn:uad}--\eqref{eqn:j2} is independent of $d\in[2b,1-a-b]$, and so is any $L^p$ norm of $u_{a,d}$. Thus mass concentration comparisons involving $\mu_{u_{a,d}}$ and the constant-valued function $1$, as well as all participation ratios $\alpha_{p,q}(u_{a,d},(0,1))$, are invariant to changes in $d$, in spite of the fact that $u_{a,d}$ delocalizes as $d$ grows.}
    \label{fig:alpha_fail}
\end{figure}
\begin{remark}
  Two real valued functions (with possibly different domains of definition) that have the same distribution function  are said to be equimeasurable \cite[Definition 1.1.2]{kesavan2006symmetrization}.
  Actually the equivalence relation $u_1\sim u_2$ iff $\mu_{u_1} \equiv \mu_{u_2}$  provides a partition of $L^1(\Omega)$ into disjoint equivalence classes of equimeasurable functions that have the 
  same localization coefficient $\alpha_{2,4}(u,\Omega)$.
\end{remark}

\section{A new measure of localization}\label{sec:new}
In the following, $\Omega\subseteq\R^d$ is a domain with a positive Lebesgue measure $|\Omega|$. We further denote by
$\lambda$ the Lebesgue measure, that is, $d\lambda = dx$. Recall that a multiplicative measure $\tau_u$ on $\Omega$ with the associated nonnegative-valued and non-trivial density $u$ is defined by
$d\tau_u =u(x)dx$.

\subsection{Measuring localization using the Wasserstein-2 distance}
Let $\mu$ and $\nu$ be two  Borel probability measures on $\Omega$, and write $W_2(\mu, \nu)$ for the quadratic Wasserstein distance 
between $\mu$ and $\nu$  defined  by \cite{villani2009optimal},
\begin{eqnarray} \label{Transport}
    W_2(\mu,  \nu) =  \left(\inf_{\gamma} \int_{\Omega\times \Omega} |x-y|^2 d\gamma(x,y) \right)^{1/2},
\end{eqnarray}
where the infimum is taken over all Borel probability measures $\gamma$  on $\Omega\times\Omega$ satisfying $\gamma(A\times \Omega) = \mu(A)$ and $\gamma(\Omega\times A) = \nu(A)$ for all Borel subsets $A$ of $\Omega$.

\begin{definition} Let $u$ be the density associated with a Borel probability measure $\mu_u$ on $\Omega$. We then define the localization coefficient $\beta(u,\Omega)$ of $u$ as
\[
\beta(u,\Omega)=W_2(\mu_u,|\Omega|^{-1}\lambda),
\]
where $\lambda$ is the Lebesgue measure having as its density the characteristic function of $\Omega$.
\end{definition}
We thus propose to define and quantify the localization of a probability density $u$ on $\Omega$ in terms of the Wasserstein-2 distance between its associated measure $\mu_u$ and the measure $|\Omega|^{-1}\lambda$ associated with the least localized probability density on $\Omega$, namely with the constant-valued function $|\Omega|^{-1}$. Clearly, we expect more localized $u$ to yield larger values of $\beta(u,\Omega)$. Figure~\ref{fig:beta_good} includes the example from Figure~\ref{fig:alpha_fail} and shows how $\beta(u,\Omega)$ indicates the localization of the family~\eqref{eqn:uad}--\eqref{eqn:j2} of functions, as well as of the family
\begin{equation}\label{eqn:usigma}
u_{\sigma}(x)=\exp(-(x-1/2)^2/\sigma^2)-\exp(-1/4\sigma^2),\quad x\in(0,1).
\end{equation}
Specifically, \(\beta(u,\Omega)\) does distinguish differing degrees of localization among functions sharing the same \(L^p\) norms for \(p \in [1,\infty]\), though it is clearly influenced by a boundary proximity effect. We address the latter in Section~\ref{sec:be}.
\begin{figure}
    \centering
\includegraphics[width=0.49\linewidth]{Images/1D_uds_2.pdf}\includegraphics[width=0.49\linewidth]{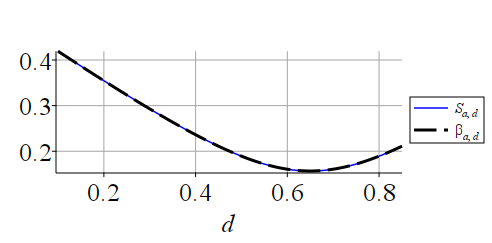}\\\includegraphics[width=0.49\linewidth]{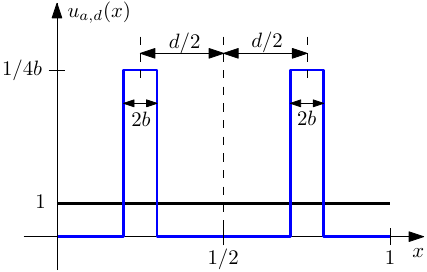}\includegraphics[width=0.49\linewidth]{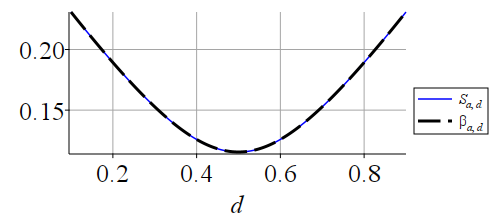}\\\includegraphics[width=0.49\linewidth]{Images/1D_uds_2.pdf}\includegraphics[width=0.49\linewidth]{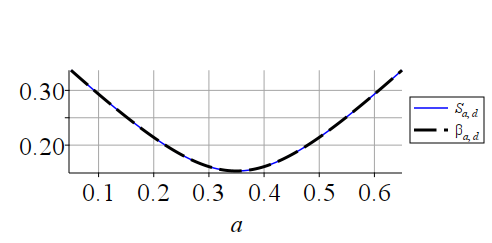}\\\includegraphics[width=0.49\linewidth]{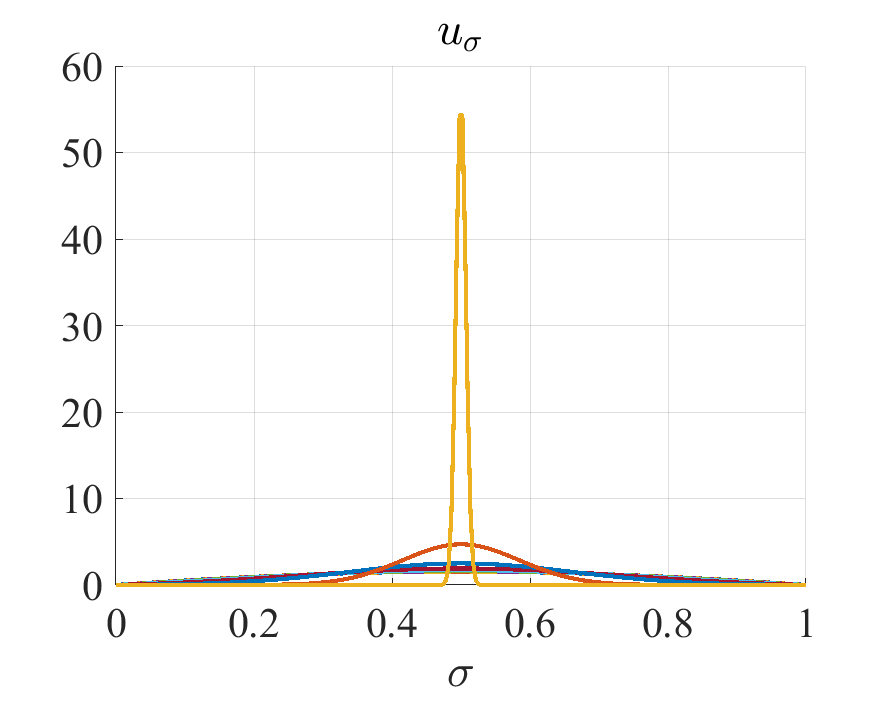}\includegraphics[width=0.49\linewidth]{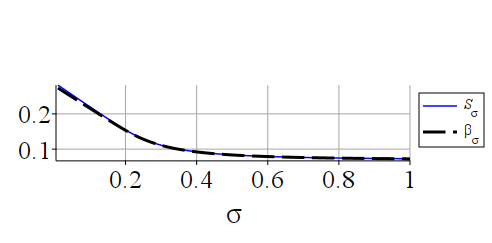}\caption{$\beta_{a,d}=\beta(u_{a,d},(0,1))=W_2(\mu_{u_{a,d}},\lambda)$ vs. $S_{a,d}=\|\mu_{u_{a,d}}-\lambda\|_{\Hdot^{-1}(\lambda)}$ for $u_{a,d}$ from~\eqref{eqn:uad}--\eqref{eqn:j2}, and $\beta_{\sigma}=\beta(u_{\sigma},(0,1))=W_2(\mu_{u_{\sigma}},\lambda)$ vs. $S_{\sigma}=\|\mu_{u_{\sigma}}-\lambda\|_{\Hdot^{-1}(\lambda)}$ for $u_{\sigma}$ from~\eqref{eqn:usigma}: first as function of $d\in[2b,1-a-b]$, with $a=0.1$ and $b=0.05$; next as function of $d\in[b,1/2-b]$, for symmetric densities with $b=0.05$; next as function of $a\in[b,1-d-b]$, when translating a fixed density profile with $b=0.05$ and $d=0.3$; and finally as function of $\sigma\in[0,1]$.}
    \label{fig:beta_good}
\end{figure}

To motivate our new localization concept we shall show how the Wasserstein-2 distance is related to the optimal transport between measures. Let $T: \Omega \to \Omega$ be a Borel measurable mapping. The push-forward of a probability measure $\mu$  by the mapping $T$ is the probability measure, denoted by $T_\#\mu$, and defined by $T_\#\mu(B) = \mu(T^{-1}(B)) $ for all Borel subsets $B$ of $\Omega$. $T_\#$ is also called a transport map from $\mu$ to $\nu$. We recall the result on the existence and characterization of a minimizer of \eqref{Transport} under additional assumptions on 
$\mu$ due to Brenier \cite{brenier1991polar}. If $\mu$ and $\nu$ have finite moments of order $2$, and $\mu$
is absolutely continuous with respect to  the Lebesgue measure ($d\mu(x) = \rho(x) dx$ with $\rho $ a non-negative Lebesgue integrable function), then
\begin{eqnarray} \label{Transport2}
    W_2(\mu,  \nu) =  \left(\inf_{T_\#\mu =\nu} \int_{\Omega} |x-T(x)|^2 \mu(x) dx \right)^{1/2},
\end{eqnarray}
where the infimum is calculated over all measurable Borel mappings $T: \Omega \to \Omega$ that push forward $\mu$ onto $\nu$. Notice that 
 since  $T_\#\mu =\nu$ the joint measure $(\textrm{Id},T)_{\#} \mu$ on $\Omega\times \Omega$ lies in the minimization set 
for \eqref{Transport}, and hence  $W_2(\mu,  \nu)$ is by construction lower than the infimum of \eqref{Transport2}.  The reverse inequality
is  more difficult to show and is  known to not hold in a more general setting \cite{villani2009optimal, santambrogio2015optimal, figalli2021invitation}.\\

\subsection{Localization in terms of a Sobolev norm}

Using the Wasserstein-2 distance to measure localization is intuitive and well-justified, but the actual computation of the localization coefficient \( \beta(u,\Omega) \) then requires solving a linear programming problem. For a density \( u \) sampled at \( n \) grid points in a bounded domain \( \Omega\subset\R^d \), the computational complexity is \( O(n^3 + n^2d) \), accounting for the calculation of pairwise distances between grid points. Typically, \( n \sim N^d \) for some constant \( N\gg1 \), which implies that computing \( W_2(\mu_u, \nu_{\text{deloc}}) \) from Definition~\ref{def:1} typically has a complexity of \( O(N^{3d} + N^{2d}d) \). As the dimension \(d\) grows, this scaling becomes impractical and can introduce a significant computational bottleneck—particularly in iterative procedures aiming to optimize a function’s localization by adjusting model parameters and repeatedly evaluating \(\beta(u,\Omega)\). To address this issue, we can leverage a linearization of the Wasserstein-2 distance $W_2(\mu_u,|\Omega|^{-1}\lambda)$ in terms of the weighted homogeneous Sobolev norm $\|\mu_u-|\Omega|^{-1}\lambda\|_{\Hdot^{-1}(|\Omega|^{-1}\lambda)}$, defined for any signed measure $\tau_t\in \dot{H}^{-1}(|\Omega|^{-1}\lambda)$ with zero-mean density $t(x)$ as
\[
\|\tau_t\|_{\Hdot^{-1}(|\Omega|^{-1}\lambda)}=\sup\left\{\left|\int_{\Omega}\varphi(x)t(x)dx\right|,\,\varphi\in\dot{H}^1(|\Omega|^{-1}\lambda),\,\|\nabla\varphi\|_{L^2(|\Omega|^{-1}\lambda)}\le 1\right\}.
\]
Specifically, equipping the space $\dot{H}^1(|\Omega|^{-1}\lambda)$ of zero-mean $H^1(\Omega)$ functions with the inner product $(\varphi,\psi)=(\nabla\varphi,\nabla\psi)_{L^2(|\Omega|^{-1}\lambda)}$ and with the induced norm
\[
\|\varphi\|^2_{\Hdot^1(|\Omega|^{-1}\lambda)}=\int_{\Omega}|\nabla\varphi(x)|^2|\Omega|^{-1}dx,
\]
as well as integrating by parts, we see from Riesz's representation theorem that there is a unique $w\in \dot{H}^1(|\Omega|^{-1}\lambda)$ satisfying
\[
\int_{\Omega}\varphi(x)t(x)dx=\langle\varphi,\tau_t\rangle=\int_{\Omega}\nabla w(x).\nabla\varphi(x)|\Omega|^{-1}dx,\quad\varphi\in \dot{H}^1(|\Omega|^{-1}\lambda),
\]
which in turn is the weak formulation of the boundary value problem
\begin{equation}\label{eqn:PoiNeu}
\left\{\begin{array}{rcl}-\Delta w&=&|\Omega|t,\quad\text{in}\,\,\Omega,\\\partial_nw&=&0\quad\text{at}\,\,\partial\Omega.\end{array}\right.
\end{equation}
Since the compatibility condition
\[
\int_{\Omega}t(x)dx=0,
\]
is satisfied by assumption, we have a unique weak solution of~\eqref{eqn:PoiNeu} in $H^1(\Omega)$. Thus, the unique representative of $\tau_t$ in $\dot{H}^1(|\Omega|^{-1}\lambda)$ satisfies~\eqref{eqn:PoiNeu} in the weak sense. It is furthermore readily shown that $\|\tau_t\|_{\Hdot^{-1}(|\Omega|^{-1}\lambda)}=|\Omega|^{-1/2}\|\nabla w\|_{L^2(\Omega)}$: indeed, we have
\begin{align*}
\|\tau_t\|_{\Hdot^{-1}(|\Omega|^{-1}\lambda)}&\le\sup\left\{\int_{\Omega}|\nabla\varphi||\nabla w||\Omega|^{-1}dx,\,\, \varphi\in\dot{H}^1(|\Omega|^{-1}\lambda),\,\|\nabla\varphi\|_{L^2(|\Omega|^{-1}\lambda)}\le 1\right\}\\&\le|\Omega|^{-1/2}\|\nabla w\|_{L^2(\Omega)},
\end{align*}
while
\[
\left|\left\langle w|\Omega|^{1/2}\|\nabla w\|_{L^2(\Omega)}^{-1},\tau_t\right\rangle\right|=|\Omega|^{-1/2}\|\nabla w\|_{L^2(\Omega)}.
\]
We can in fact connect the Wasserstein-2 distance and the Sobolev $\Hdot^{-1}$ metric. Starting with dimension one, given an interval $\Omega=(x_0,x_1)$, we first transform $L^1(\Omega)$-normalized densities $v$ to $L^1(0,1)$-normalized densities $u$ via $u(x)=|\Omega|v(|\Omega|x+x_0)$, $x\in(0,1)$. Next, let $f(x)=x$, $x\in(0,1)$, be the cumulative density function of the constant-valued density $1$, with the inverse function $f^{-1}(y)=y$, $y\in(0,1)$. We have that, explicitly~\cite{ramdas2017wasserstein},
\begin{equation}\label{eqn:W2_1D}
W_2(\mu_u,\lambda)^2=\int_0^1(U^{-1}(y)-f^{-1}(y))^2dy,
\end{equation}
where $U$ is the cumulative distribution function (CDF) of $u$,
\[
U(x)=\int_0^xu(t)dt,\quad x\in(0,1),
\]
and $U^{-1}$ is the corresponding quantile function. Since $U$ may generally be constant in some regions, and thus non-injective, we need to define $U^{-1}$ as a pseudo-inverse for use in~\eqref{eqn:W2_1D},
\begin{equation*}
    U^{-1}(r) \;=\; \inf\left\{t: \;\; U(t)>r \right\}.
\end{equation*}
We now establish the equality of the Wasserstein-2 distance and the Sobolev $\Hdot^{-1}$ metric in dimension one:
\begin{lemma}\label{lem:W2_vs_H-1} For any probability density $u$ on $(0,1)$ we have
\[
\|\mu_u-\lambda\|_{\Hdot^{-1}(\lambda)}=W_2(\mu_u,\lambda).
\]
\end{lemma}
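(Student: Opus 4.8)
The plan is to reduce both quantities to explicit one–dimensional integrals in the cumulative distribution function $U$ of $u$, and then to reconcile the two by a one–line application of the fundamental theorem of calculus. For the $\Hdot^{-1}$ side, note that with $\Omega=(0,1)$ we have $|\Omega|=1$ and $\mu_u-\lambda$ has the zero–mean density $t=u-1$, so by the PDE characterisation of the $\Hdot^{-1}(\lambda)$ norm derived above its representative $w\in\Hdot^1(\lambda)$ solves $-w''=u-1$ on $(0,1)$ with $w'(0)=w'(1)=0$. Integrating once and using the left boundary condition gives $w'(x)=x-U(x)$ (which is bounded, so $w\in\Hdot^1(\lambda)$ and $\mu_u-\lambda$ indeed lies in $\Hdot^{-1}(\lambda)$; the right condition $w'(1)=1-U(1)=0$ holds automatically since $u$ is a probability density), whence
\[
\|\mu_u-\lambda\|_{\Hdot^{-1}(\lambda)}^2=\|w'\|_{L^2(0,1)}^2=\int_0^1\bigl(U(x)-x\bigr)^2\,dx.
\]

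For the Wasserstein side, I would start from \eqref{eqn:W2_1D}, i.e.\ $W_2(\mu_u,\lambda)^2=\int_0^1\bigl(U^{-1}(y)-y\bigr)^2\,dy$, and transform the integral from the variable $y$ to the variable $x$ using the relation $U_\#\mu_u=\lambda$ on $(0,1)$ (the probability integral transform, valid because $\mu_u$ is non-atomic, its distribution function $U$ being continuous). Applied to the bounded integrand $y\mapsto(U^{-1}(y)-y)^2$ this gives $W_2(\mu_u,\lambda)^2=\int_0^1\bigl(U^{-1}(U(x))-U(x)\bigr)^2u(x)\,dx$. Since $U^{-1}(U(x))\ge x$ always, with strict inequality exactly on the union $D$ of the maximal intervals on which $U$ is constant, and since $u=0$ a.e.\ on $D$ so that $\mu_u(D)=0$, we have $U^{-1}(U(x))=x$ for $\mu_u$-a.e.\ $x$, and therefore
\[
W_2(\mu_u,\lambda)^2=\int_0^1\bigl(x-U(x)\bigr)^2u(x)\,dx.
\]
(Alternatively, this last identity is immediate from Brenier's theorem, already quoted above, as the nondecreasing map $U$ from $\mu_u$ to $\lambda$ is the optimal transport map.)

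To close the gap between the two displays, set $v(x)=x-U(x)$; then $v$ is absolutely continuous on $[0,1]$, vanishes at both endpoints, satisfies $v'=1-u$ a.e., and $v^2$ is bounded, so $v^3$ is absolutely continuous and
\[
\int_0^1v^2u\,dx=\int_0^1v^2\,dx-\int_0^1v^2v'\,dx=\int_0^1v^2\,dx-\tfrac13\bigl[v^3\bigr]_0^1=\int_0^1v^2\,dx.
\]
Combining the three displays yields $W_2(\mu_u,\lambda)^2=\int_0^1(U(x)-x)^2\,dx=\|\mu_u-\lambda\|_{\Hdot^{-1}(\lambda)}^2$, and taking square roots gives the claim.

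I expect the only genuinely delicate point to be the pseudo-inverse bookkeeping of the second step — proving $U_\#\mu_u=\lambda$ and $U^{-1}\circ U=\mathrm{id}$ $\mu_u$-a.e.\ — which must be handled with care precisely when $u$ vanishes on a set of positive measure, so that $U$ fails to be strictly increasing; routing instead through Brenier's theorem bypasses this at the cost of invoking optimality of the monotone rearrangement. The Neumann–problem computation of the first step and the telescoping cancellation $\int_0^1v^2v'=\tfrac13[v^3]_0^1=0$ of the third step are otherwise routine once the relevant absolute continuity is recorded.
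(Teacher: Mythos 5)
Your proposal is correct, and its skeleton is the same as the paper's: both sides are reduced to explicit integrals in the CDF $U$ (the Neumann problem giving $\|\mu_u-\lambda\|_{\Hdot^{-1}(\lambda)}^2=\int_0^1(U(x)-x)^2dx$, the one-dimensional quantile formula plus the change of variables $y=U(x)$ giving $W_2(\mu_u,\lambda)^2=\int_0^1(x-U(x))^2u(x)\,dx$), and the two are reconciled by the observation that $\int_0^1(U-x)^2(u-1)\,dx=-\int_0^1 v^2v'\,dx=-\tfrac13[v^3]_0^1=0$ with $v=x-U(x)$, which is exactly the paper's "integration by parts" step made explicit. Where you genuinely diverge is in handling the non-injectivity of $U$: the paper first assumes $u\in C([0,1])$, proves $U^{-1}\circ U=\mathrm{id}$ on the open set $\Sigma_u=\{u>0\}$, and then disposes of general densities with a one-line appeal to the density of $C([0,1])$ in $L^1$, whereas you work with an arbitrary probability density directly, using the probability integral transform $U_\#\mu_u=\lambda$ (legitimate since $U$ is continuous) and the fact that $U^{-1}(U(x))>x$ only on the union of constancy intervals of $U$, which is $\mu_u$-null. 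This buys you a self-contained argument that avoids the paper's approximation step (whose continuity-in-$L^1$ of both sides is left unverified there), at the cost of the pseudo-inverse bookkeeping you flag; both routes are sound.
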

\begin{proof}
The system
\begin{equation*}\left\{
    \begin{array}{rcl}-g''(x)&=&u(x)-1,\quad x\in(0,1),\\
    g'(0)&=&0,\\
    g'(1)&=&0,\\
    \int_0^1g(x)dx&=&0,\end{array}\right.
\end{equation*}
has the unique solution
\begin{align*}
g(x)=-\int_{0}^x\int_{0}^t(u(\tau)-1)d\tau dt+x\int_0^1(u(t)-1)dt+C,\quad x\in(0,1),
\end{align*}
with some constant $C$, and thus
\begin{align*}
-g'(x)&=\int_0^x(u(t)-1)dt-\int_0^1(u(t)-1)dt\\&=U(x)-f(x),\quad x\in(0,1),
\end{align*}
leading to 
\[
\|\mu_u-\lambda\|_{\Hdot^{-1}(\lambda)}^2=\int_0^1g'(x)^2dx=\int_0^1(U(x)-x)^2dx.
\]
To relate this to the Wasserstein-2 distance between $\mu_u$ and $\lambda$, we first assume that $u\in C([0,1])$. Then $\Sigma_u = u^{-1}((0, +\infty)) \cap (0,1)$ is a non-empty open set, and we claim that $U: \Sigma_u\rightarrow U(\Sigma_u)$ is bijective.
To prove that $U|_{\Sigma_u}$ is injective, assume that $U(x_1) = U(x_2)$ for some $x_1, x_2 \in \Sigma_u$ satisfying $x_1 < x_2$. Since $U$ is non-decreasing we have $U(x) = U(x_1)$ for all $x\in [x_1, x_2]$. Hence $u(x) = 0 $ on $(x_1, x_2)\cap \Sigma_u \not= \emptyset,$ which is in contradiction with the definition of  $\Sigma_u$. Now write $\widetilde U^{-1}$ for the inverse function of $U|_{\Sigma_u}$. We shall now show that $\widetilde U^{-1}=U^{-1}$ on $U(\Sigma_u)$.  Let $r= U(x_r)$ with $x_r = \widetilde U^{-1}(r) \in \Sigma_u$. Since $\Sigma_u$ is an open set there exists $\varepsilon>0$ small enough such that 
$(x_r-\varepsilon, x_r+\varepsilon) \subset \Sigma_u$. Taking into account that $U$ is increasing  on  $(x_r-\varepsilon, x_r+\varepsilon)$, we have $U^{-1}(r) = x_r= \widetilde U^{-1}(r)$. Consequently $U^{-1}(U(x)) = x$ on $\Sigma_u$. With this in mind, we can perform the change of variables $y=U(x)$ and write
\begin{eqnarray*}
    W_2(\mu_u,\lambda)^2&=&\int_0^1(U^{-1}(y)-y)^2dy=\int_0^1\left(U^{-1}(U(x))-U(x)\right)^2u(x)dx\\&=&\int_{\Sigma_u}\left(U^{-1}(U(x))-U(x)\right)^2u(x)dx = \int_{\Sigma_u}(x-U(x))^2u(x)dx \\&=& \int_0^1(U(x)-x)^2(u(x)-1)dx+ \int_0^1(U(x)-x)^2dx,
\end{eqnarray*}
where finally, via integration by parts,
\[
\int_0^1(U(x)-x)^2\left(u(x)-1\right)dx=0,
\]
that is,
\[
W_2(\mu_u,\lambda)^2 = \|\mu_u-\lambda\|_{\Hdot^{-1}(\lambda)}^2.
\]
For a general probability density $u$, the result follows from the denseness of $C([0,1])$ in $L^1([0,1])$.
\end{proof}

\begin{remark}
   Actually one can check that  we have  $\overline{U(\Sigma_u)} = [0,1]$.  Assume that the latter is false, and let $y\in [0,1] \setminus \overline{U(\Sigma_u)}$.  
We first consider the case $y>0$, and  set $x^* = \inf\{x\in [0,1]; \;\; U(x)=y \}$. Since $U$ is continuous we have $U(x^*)=y$. Recall that $U$ is non-decreasing, and so $\{x\in [0,1]; \;\; U(x)=y \}$ is non-empty. Using the property of the infimum and the fact that $U(x) = \int_{\Sigma_u\cap (0,x)} u(t) dt$, one can easily show that there exists $\varepsilon>0$ small enough such that $(x^*-\varepsilon, x^*) \subset \Sigma_u$. Thus the sequence $y_j = U(x^*-\frac{1}{j}) \in U(\Sigma_u)$ converges to $y$, which is in contradiction with the
first assumption. If $y=0$, the same result can be derived  by taking $x^* = \sup\{x\in [0,1]; \;\; U(x)=y \}$ and proving that $(x^*, x^*-\varepsilon) \subset \Sigma_u$
for some small $\varepsilon>0$.
\end{remark}
Figure~\ref{fig:beta_good} is a numerical illustration of the result of Lemma~\ref{lem:W2_vs_H-1} for particular families of probability densities.

In dimension $d>1$ with the density $u$ bounded, we readily specialize Peyre~\cite[theorems 1 and 5]{Peyre-2018} to estimates relating $W_2(\mu_u,|\Omega|^{-1}\lambda)$ and $\|\mu_u-|\Omega|^{-1}\lambda\|_{\Hdot^{-1}(|\Omega|^{-1}\lambda)}$. Indeed, $\mu_u\le\|u\|_{L^{\infty}(\Omega)}\lambda$, so we can follow the proof of~\cite[Theorem 5]{Peyre-2018} with the additional observation that $\mu_t\le(\|u\|_{\infty}|\Omega|)^{1-t}|\Omega|^{-1}\lambda$, where $\mu_t$, $t\in[0,1]$, is the displacement interpolation between $\mu_u$ and $|\Omega|^{-1}\lambda$ satisfying $\mu_0=\mu_u$ and $\mu_1=|\Omega|^{-1}\lambda$, to conclude that
\begin{equation}\label{eqn:W2lower}
\frac{\ln(\|u\|_{\infty}|\Omega|)}{2(\|u\|_{\infty}^{1/2}|\Omega|^{1/2}-1)}\|\mu_u-|\Omega|^{-1}\lambda\|_{\Hdot^{-1}(|\Omega|^{-1}\lambda)}\le W_2(\mu_u,|\Omega|^{-1}\lambda);
\end{equation}
note that we here use the fact that $\|\cdot\|_{\Hdot^{-1}(\lambda)}=|\Omega|^{-1/2}\|\cdot\|_{\Hdot^{-1}(|\Omega|^{-1}\lambda)}$.
Finally~\cite[Theorem 1]{Peyre-2018} gives the upper bound
\begin{equation}\label{eqn:W2upper}
W_2(\mu_u,|\Omega|^{-1}\lambda)\le2\|\mu_u-|\Omega|^{-1}\lambda\|_{\Hdot^{-1}(|\Omega|^{-1}\lambda)}.
\end{equation}
We expect that the estimates~\eqref{eqn:W2lower}--\eqref{eqn:W2upper} can be improved in the present special case where the measures are defined in terms of probability densities. Specifically, since the $\Hdot^{-1}$-norm is related to the regularity of the underlying densities 
and the Wasserstein-2 distance measures the optimal transportation cost between the densities, we expect the optimal bounds to depend on the  geometry of the domain $\Omega$.

\subsection{Compensating for the boundary effect}\label{sec:be}
If the domain \(\Omega\) has a boundary then the intuitive concept of localization for a density \(u\) defined on \(\Omega\) may not align directly with the numerical value \(\beta(u,\Omega)\). The first three cases illustrated in Figure~\ref{fig:beta_good} exemplify this discrepancy: the value of \(\beta\) increases as portions of the mass of \(u\) approach the boundary of the interval \((0,1)\), even though intuitively one might perceive \(u\) as maintaining or even losing localization in such situations. Since it is indeed relatively expensive to redistribute a mass concentrated near the boundary back to the uniform mass distribution $|\Omega|^{-1}$, this estimation of localization does make sense; however, if the examples of Figure~\ref{fig:beta_good} come from a numerical truncation of an unbounded domain, then clearly the artificially introduced boundary distorts our notion of localization of $u$. One way to alleviate this issue is to allow the flow of mass across $\partial\Omega$, for example by periodizing the mass transport problem. This, however, is challenging for general geometries $\Omega$ in $\R^d$ with $d\ge2$, and it does not always work satisfactorily, as illustrated in Figure~\ref{fig:periodic_bad}. There, the cost function
\[
c(x,y)=(x-y)^2,\quad x,y\in(0,1),
\]
associated with the Wasserstein-2 distance, is replaced with the periodized cost function
\[
c(x,y)=\min\{(x-y)^2,(1-|x-y|)^2\},\quad x,y\in(0,1).
\]
\begin{figure}
    \centering
    \includegraphics[width=0.48\linewidth]{Images/1D_uds_2.pdf}
    \includegraphics[width=0.51\linewidth]{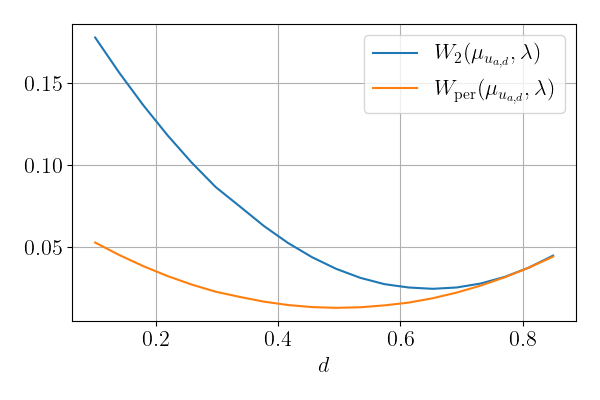}
    \caption{Wasserstein-2 optimal transport cost vs. periodized optimal transport cost between $u_{a,d}$ of~\eqref{eqn:uad}--\eqref{eqn:j2} and the constant-valued function $1$, as function of $d\in[2b, 1 - a - b]$, with $a = 0.1$ and $b = 0.05$.}
    \label{fig:periodic_bad}
\end{figure}
A simpler approach is to replace the domain $\Omega$ with a larger domain $\Omega'$, assume the density $u$ to vanish in $\Omega'\setminus\Omega$, and compute the optimal transport cost from $\mu_u$ to the constant-valued measure $|\Omega'|^{-1}\lambda$. This generally works well, as illustrated in Figure~\ref{fig:extension_good}, but can be challenging especially in high dimension due to a potentially large number of grid points needed to describe the larger domain $\Omega'$.
\begin{figure}
    \centering
    \includegraphics[width=0.5\linewidth]{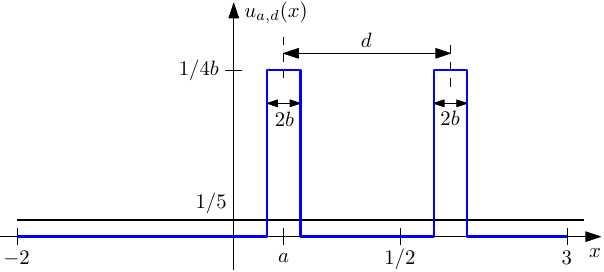}
    \includegraphics[width=0.51\linewidth]{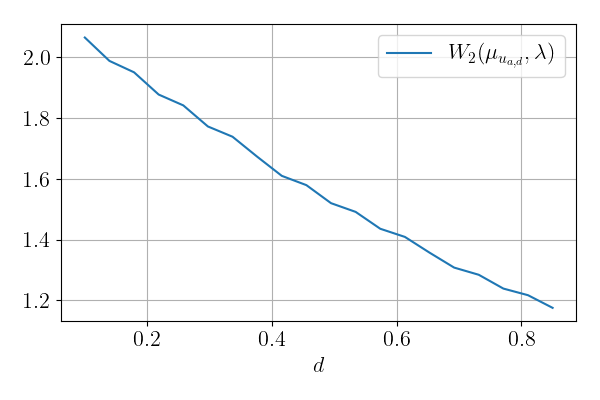}
    \caption{Extended-domain Wasserstein-2 optimal transport cost between $u_{a,d}$ of~\eqref{eqn:uad}--\eqref{eqn:j2} and the constant-valued function $1/5$, as function of $d\in[2b, 1 - a - b]$, with $a = 0.1$ and $b = 0.05$.}
    \label{fig:extension_good}
\end{figure}
Finally, the cost function can be modified to compensate for the presence of the boundary by attaining lower values for points close to $\partial\Omega$. We found this approach to work poorly in dimension one, ostensibly due to the lack of real boundary geometry there, but it seems to perform well in dimension two, as illustrated in Section~\ref{sec:dtb2D}.

\section{Further numerical examples of $\beta(u,\Omega)$}\label{sec:numerical}
\subsection{Localization of eigenfunctions of a Sturm-Liouville operator}\label{sec:leslo}
We now revisit the eigenvalue problem from~\cite{2024-localization_1D},
\begin{equation}\label{eqn:evp}
\left\{\begin{array}{rcl}
-(p(x)\phi'_{\lambda}(x))'&=&\lambda\phi_{\lambda}(x),\quad x\in(0,1),\\
\phi_{\lambda}(0)&=&\phi_{\lambda}(1)=0,
\end{array}\right.
\end{equation}
for the Dirichlet Laplacian on $(0,1)$ with the metric
\[
p(x)=\tanh(40x-10)+1.1,\quad x\in(0,1),
\]
as illustrated in Figure~\ref{fig:p}.
\begin{figure}
    \centering
    \includegraphics[width=0.5\linewidth]{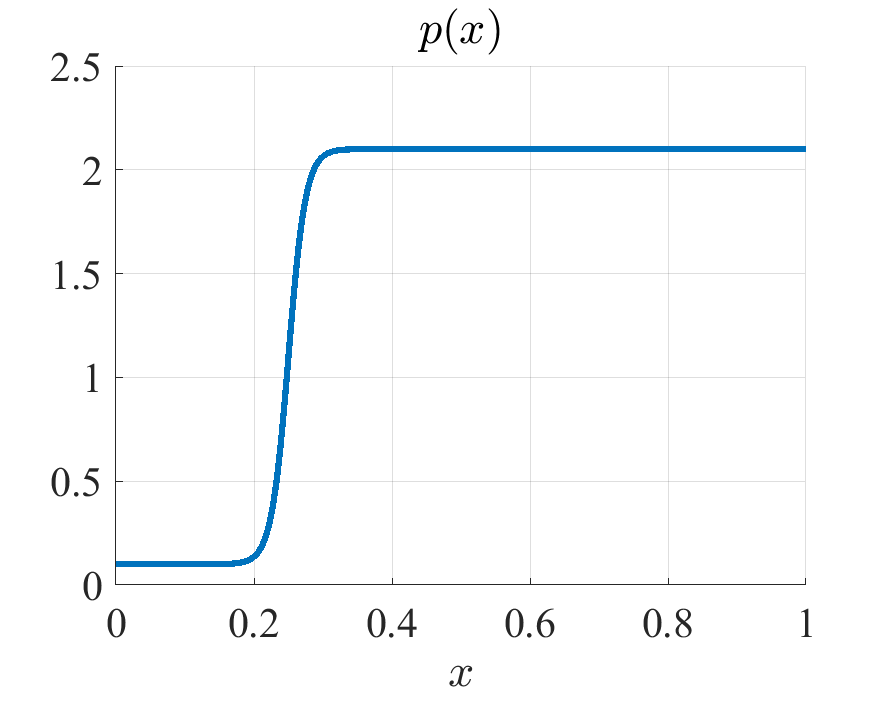}
    \caption{The metric for the Laplacian of the numerical example in Section~\ref{sec:leslo}.}
    \label{fig:p}
\end{figure}
Figure~\ref{fig:alpha_bar_beta_bar} shows a comparison between the localization coefficients $\alpha^{-1}_{2,4}(|\phi_{\lambda_i}|/\|\phi_{\lambda_i}\|_{L^1(0,1)},(0,1))$ and $\beta(|\phi_{\lambda_i}|/\|\phi_{\lambda_i}\|_{L^1(0,1)},(0,1))$, $i=1,\dots,40$, where a normalization is performed for easier reference.
\begin{figure}
    \centering
    \includegraphics[width=0.45\linewidth]{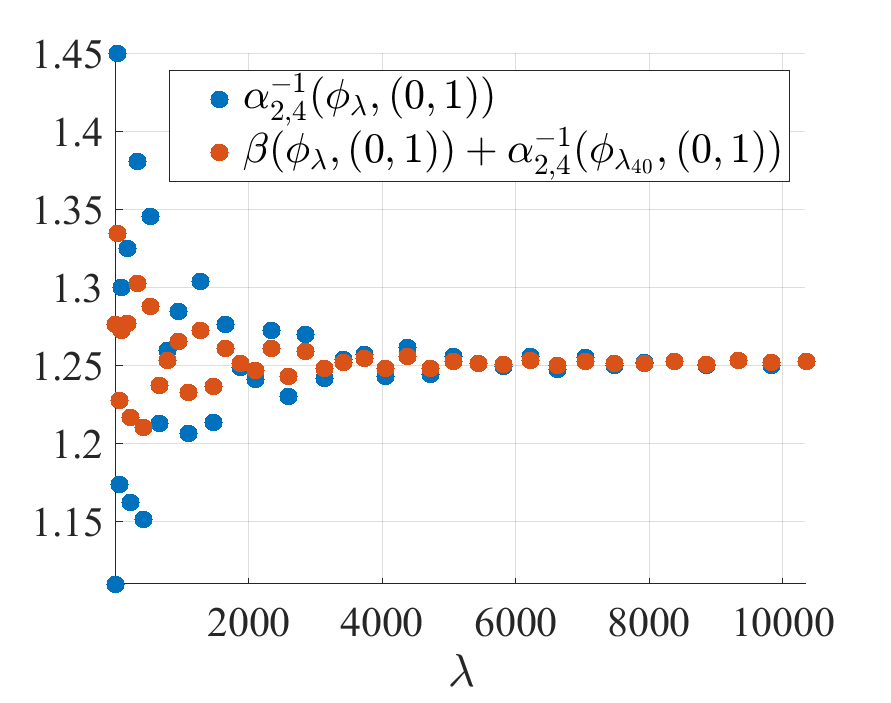}
    \includegraphics[width=0.45\linewidth]{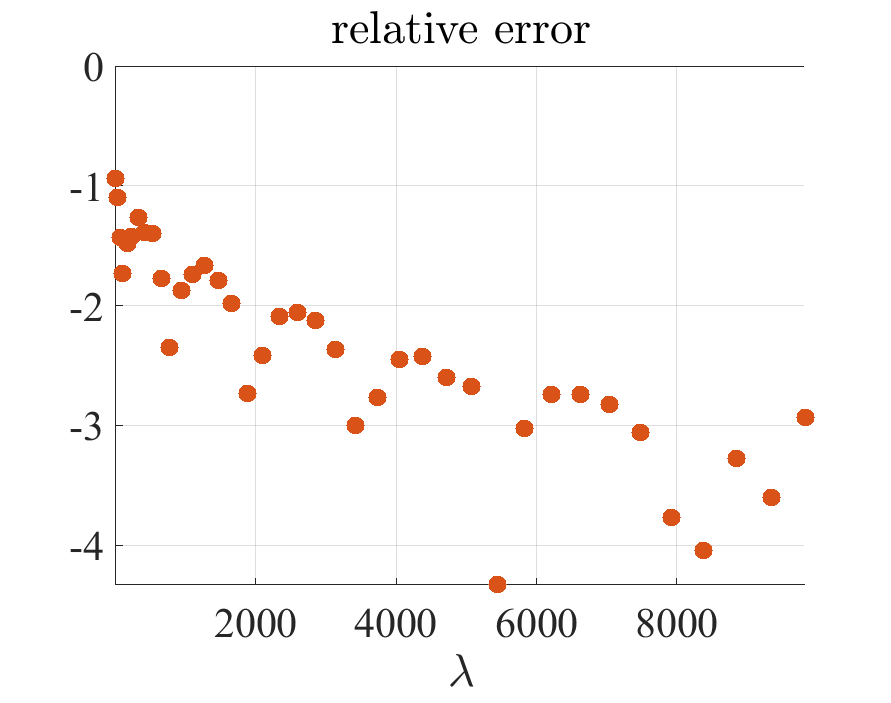}\\
    \includegraphics[width=0.45\linewidth]{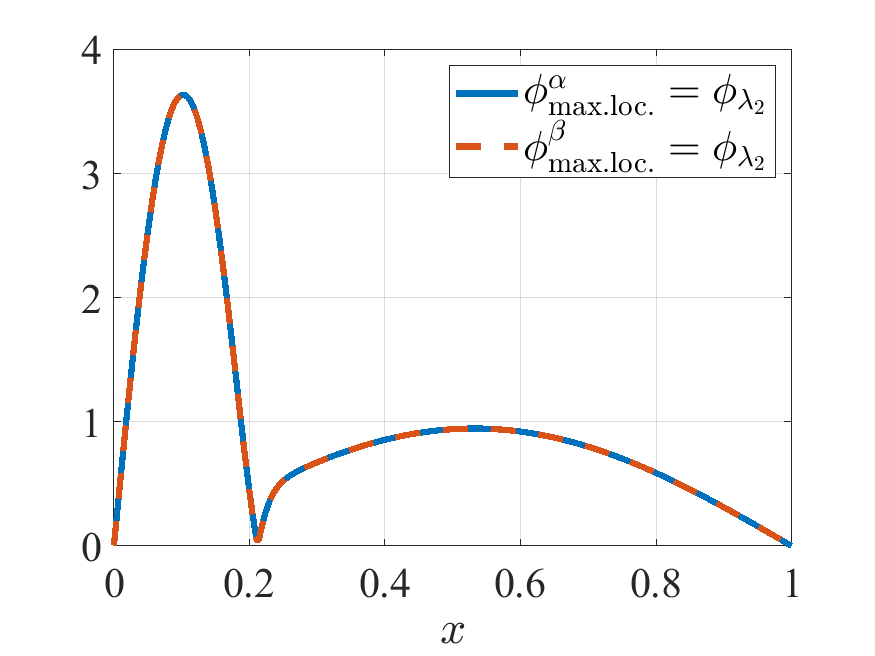}
    \includegraphics[width=0.45\linewidth]{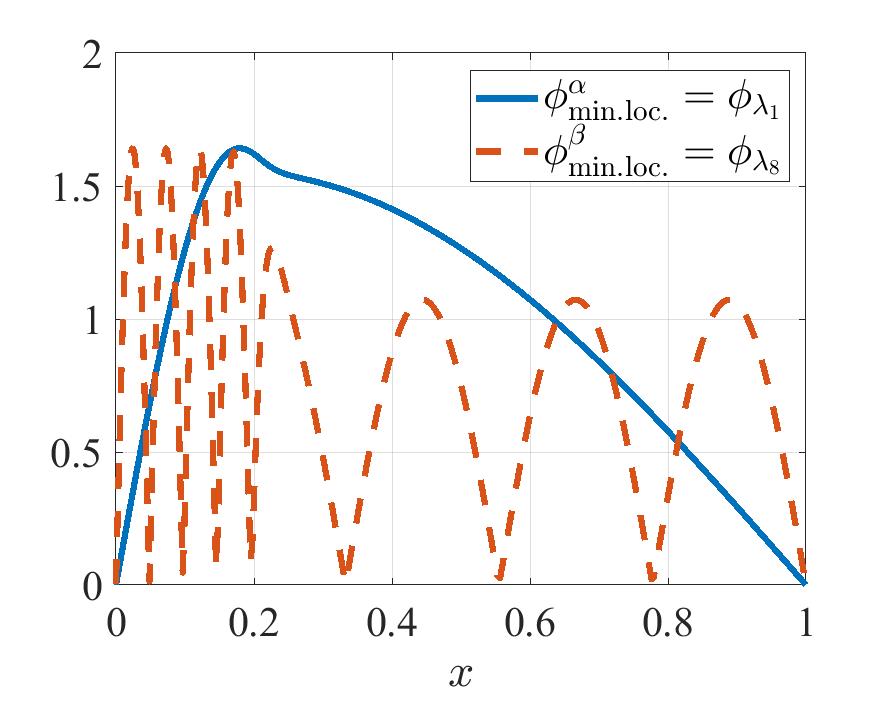}
    \caption{Top: The localization coefficients $\alpha^{-1}_{2,4}(\phi_{\lambda},(0,1))$ and $\beta(\phi_{\lambda},(0,1))+\alpha_{2,4}^{-1}(\phi_{\lambda_{40}},(0,1))$, here normalized for the purposes of comparison, coincide well overall for the eigenfunctions of the Sturm-Liouville operator considered in Section~\ref{sec:leslo}. Bottom: However, $\beta(\phi_{\lambda},(0,1))$ detects the extrema of localization more robustly than $\alpha_{2,4}(\phi_{\lambda},(0,1))$.}
    \label{fig:alpha_bar_beta_bar}
\end{figure}
There is evidently a good correspondence in the qualitative behavior of $\alpha_{2,4}$ and $\beta$ as functions of the eigenvalue $\lambda$. Specifically, both localization coefficients identify the $L^1$-normalized modulus of the second eigenfunction $\phi_{\lambda_2}$ as the most localized out of the first 40 eigenfunctions considered. Figure~\ref{fig:exten} shows the values $\beta(|\phi_{\lambda_i}|/\|\phi_{\lambda_i}\|_{L^1(-1,2)},(-1,2))$, that is, it shows the effect of extending the domain $\Omega$. While there are still significant local oscillations, the delocalization as the index $i$ grows is now better pronounced.
\begin{figure}
    \centering
    \includegraphics[width=0.8\linewidth]{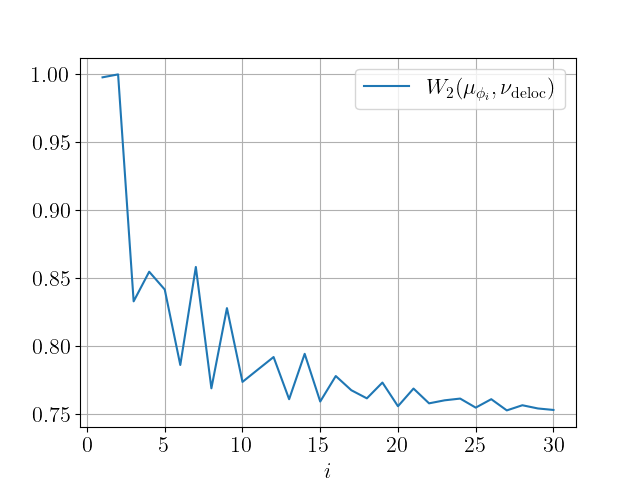}
    \caption{The $W_2$ measure of localization for the eigenvectors $\phi_i$ over the extended domain $[-1,2]$.}
    \label{fig:exten}
\end{figure}

\subsection{Localization of Neumann-Poincaré eigenfunctions on curves}\label{sec:efs_curve}
As another numerical example, we consider the localization of eigenfunctions of the Neumann-Poincaré operator associated with transmission problems of the form
\begin{equation}\label{eqn:MAIN}
\left\{\begin{array}{rcl}\Delta u&=&0\quad{\rm in}\,\,\R^2\setminus\partial\Omega,\\u|_{\partial\Omega+}&=&u|_{\partial\Omega-},\\\varepsilon\partial_{\nu}u|_{\partial\Omega+}&=&-\partial_{\nu}u|_{\partial\Omega-},\end{array}\right.
\end{equation}
where $\Omega\subset\R^2$ is an open bounded set, star-like with respect to the origin in $\R^2$, and where the boundary $\partial\Omega$ of $\Omega$ is a smooth closed curve. Moreover, $\varepsilon\in\R\setminus\{-1\}$ is a constant, $(\cdot)|_{\partial\Omega+}$ and $(\cdot)|_{\partial\Omega-}$ are limits at $\partial\Omega$ taken from $\Omega$ and from $\R^2\setminus\overline\Omega$, respectively, and $\nu$ is the unit outward normal to $\partial\Omega$. It is well-known~\cite[Proposition 11.3 on p. 40]{TaylorII} that setting $u=\mathcal{S}\phi$ in~\eqref{eqn:MAIN}, where $\mathcal S$ is the single layer potential at $\partial\Omega$,
\[
\mathcal S\phi(x)=\int_{\partial\Omega}\Phi(x-y)\phi(y)d\ell(y),\quad x\in\R^2\setminus\partial\Omega,
\]
and $\Phi$ is the outgoing fundamental solution of the Laplacian in $\R^2$,
\[
\Phi(x)=-\frac{1}{2\pi}\ln|x|,\quad x\in\R^2\setminus\{0\},
\]
leads to the eigenvalue problem $N^{\#}\phi=(\varepsilon-1)(\varepsilon+1)^{-1}\phi$ at $\partial\Omega$, with the Neumann-Poincaré operator $N^{\#}$ given by
\begin{equation}\label{eqn:Nhash}
N^{\#}\phi(x)=2\,{\rm p.v.}\int_{\partial\Omega}\partial_{\nu_x}\Phi(x-y)\phi(y)d\ell(y)=-\frac{1}{\pi}\,{\rm p.v.}\int_{\partial\Omega}\frac{(x-y).\nu_x}{|x-y|^2}\phi(y)d\ell(y),\quad x\in\partial\Omega.
\end{equation}
Thus, fixing the boundary $\partial\Omega$ and computing the eigenvalues $\lambda$ and eigenfunctions $\phi$ of $N^{\#}$ from~\eqref{eqn:Nhash} corresponds to solving transmission problems~\eqref{eqn:MAIN} with $\varepsilon=(1+\lambda)/(1-\lambda)$. Now, given the parametrization
\[
\partial\Omega=\omega([0,1))=\{(r(\theta)\cos(2\pi\theta),r(\theta)\sin(2\pi\theta)),\,\,\theta\in[0,1)\},
\]
the action of a measure $\mu_v$ with density $v$ on $\partial\Omega$ on any test function $\varphi$ on $\partial\Omega$ is expressible in terms of
\[
\langle\varphi,\mu_v\rangle=\int_{\partial\Omega}\varphi(x)v(x)ds(x)=\int_0^1\omega^{\ast}\varphi(t)\omega^{\ast}v(t)|\omega'(t)|dt,
\]
that is, in terms of the action of the measure with the pulled-back density $|\omega'|\omega^{\ast}v$, defined in $[0,1)$, on the pullback of the test function. We are therefore interested in the localization coefficients $\alpha_{2,4}(\widetilde\phi,(0,1))$ and $\beta(\widetilde\phi,(0,1))=\|\mu_{\widetilde\phi}-\lambda\|_{\Hdot^{-1}(\lambda)}$
of the normalized pullback
\[
\widetilde\phi=|\omega'|\cdot|\omega^{\ast}\phi|/\||\omega'|\omega^{\ast}\phi\|_{L^1(0,1)}
\]
of eigenfunctions $\phi$ of $N^{\#}$. For our numerical examples we consider three curves $\partial\Omega$ produced by the superformula~\cite{Gielis-2003},
\begin{equation}\label{eqn:super}
r(\theta)=\left(|a|^{-n_2}|\cos(\pi m\theta/2)|^{n_2}+|b|^{-n_3}|\sin(\pi m\theta/2)|^{n_3}\right)^{-1/n_1},\quad\theta\in[0,1),
\end{equation}
as shown in Figure~\ref{fig:curves}.
\begin{figure}
    \centering
    \includegraphics[width=0.325\linewidth]{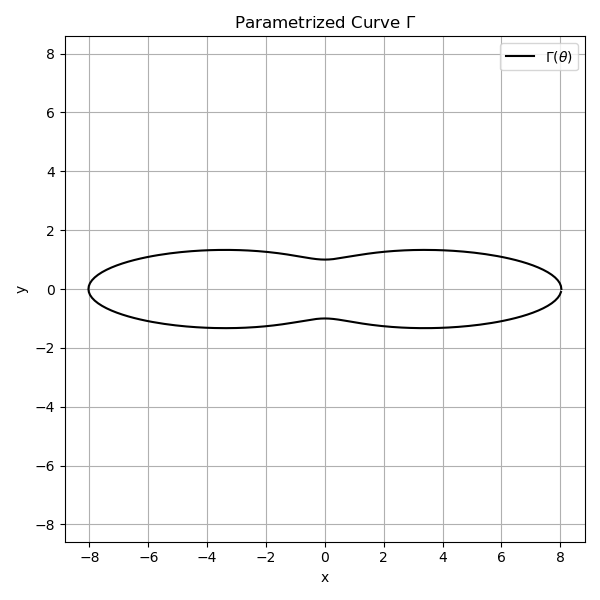}
    \includegraphics[width=0.325\linewidth]{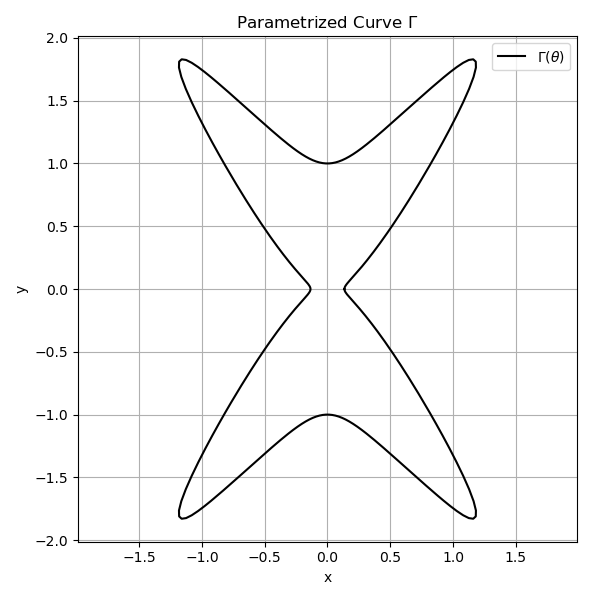}
    \includegraphics[width=0.325\linewidth]{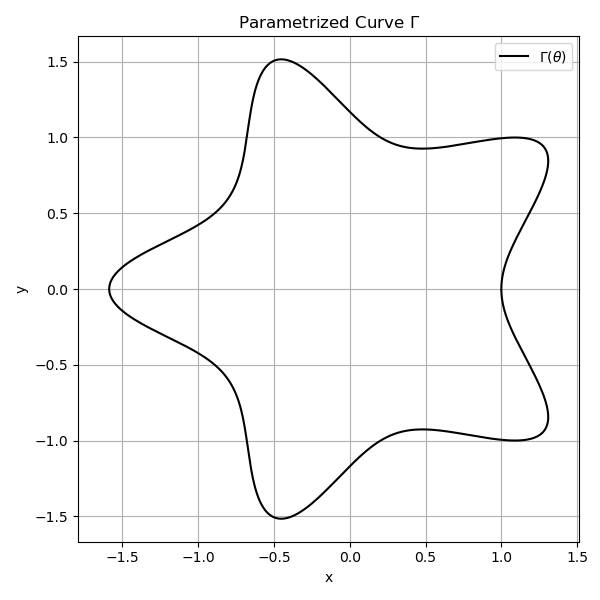}
    \caption{Three different curves $\partial\Omega$ for the numerical examples of Section~\ref{sec:efs_curve}. The curves are parametrized by the superformula~\eqref{eqn:super}. Left: $(m,n_1,n_2,n_3,a,b)=(4, 1.4, 8, 2,1.44,1)$. Center: $(m,n_1,n_2,n_3,a,b)=(4, 4, 20, 20, 0.67, 1)$. Right: $(m,n_1,n_2,n_3,a,b)=(5,3,6,6,1,1)$.}
    \label{fig:curves}
\end{figure}
Figure~\ref{fig:betas_and_one_over_alphas} shows the computed $\beta(\widetilde\phi_{\lambda},(0,1))$ and $\alpha_{2,4}^{-1}(\widetilde\phi_{\lambda},(0,1))$ as function of $\lambda$ for the $L^1$-normalized eigenfunctions pulled back from the three curves.
\begin{figure}
    \centering
    \includegraphics[width=0.75\linewidth]{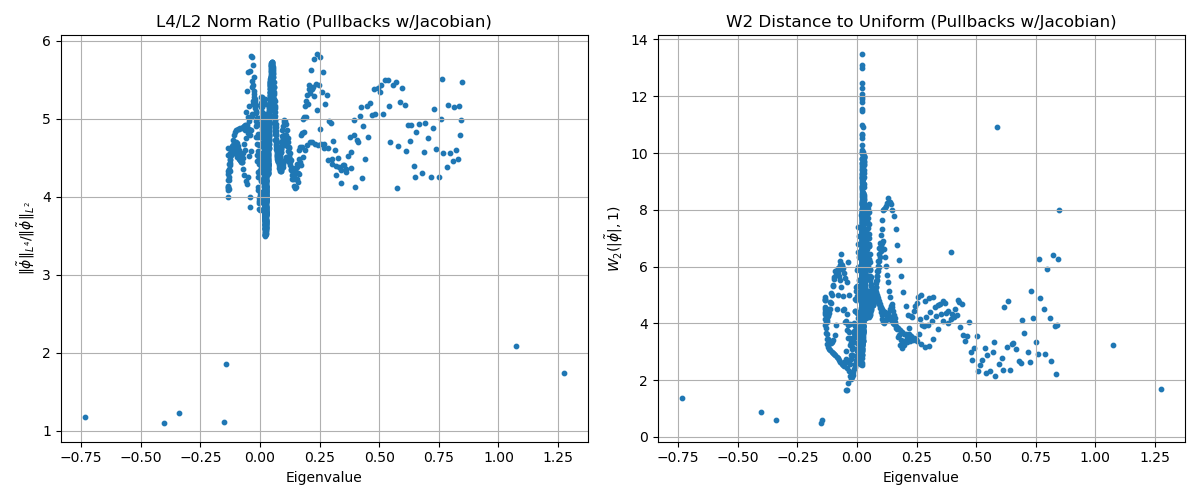}
        \includegraphics[width=0.75\linewidth]{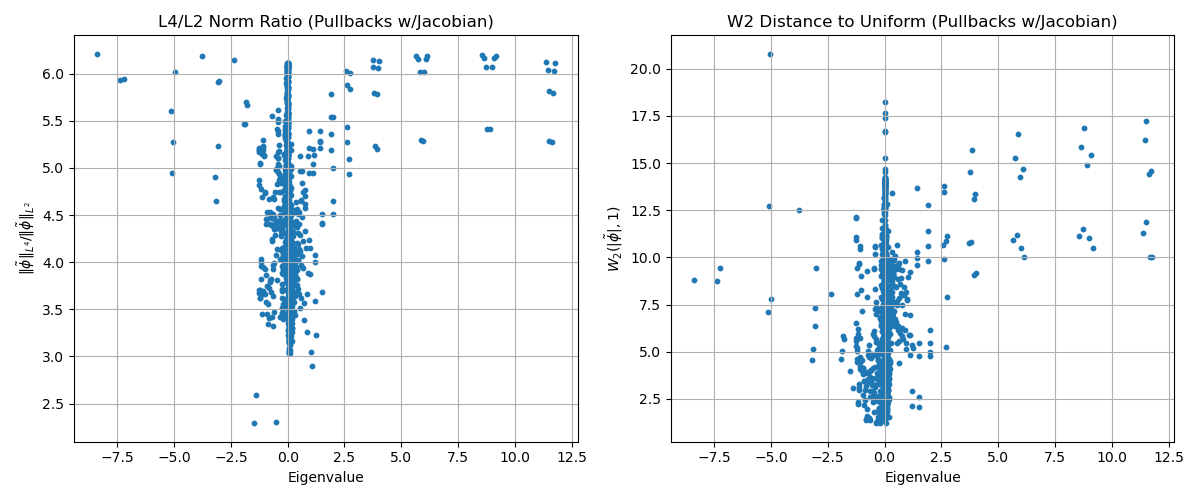}
   \includegraphics[width=0.75\linewidth]{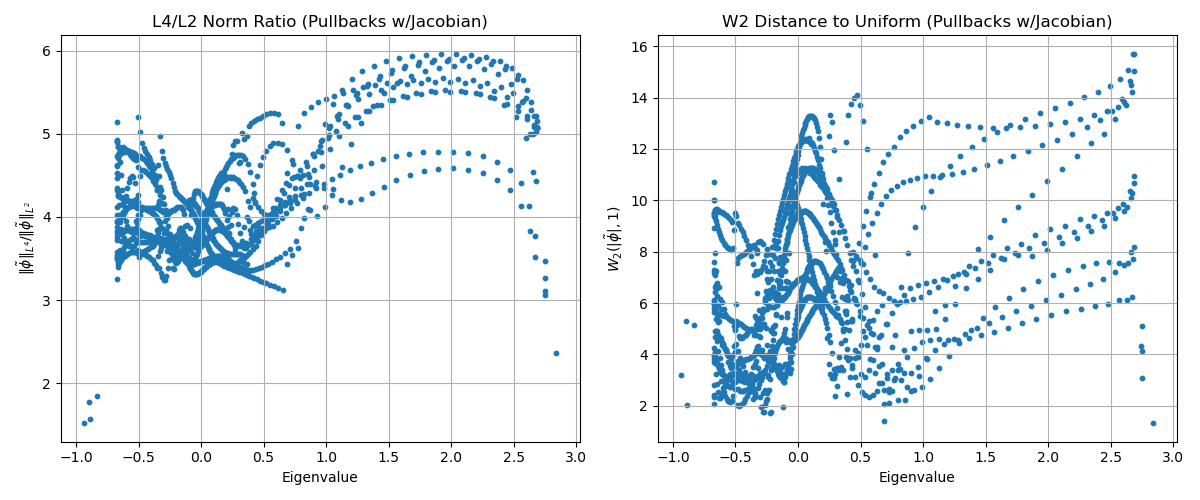}
    \caption{Localization coefficients $\alpha_{2,4}(\widetilde\phi_{\lambda},(0,1))^{-1}$ and $\beta(\widetilde\phi_{\lambda},(0,1))$ of eigenfunctions of the Neumann-Poincaré operator on curves from Figure~\ref{fig:curves}.}
    \label{fig:betas_and_one_over_alphas}
\end{figure}
We see that the behavior of our localization coefficient $\beta$ is quite similar to that of $\alpha_{2,4}^{-1}$. It is well-known that the eigenspace corresponding to the zero eigenvalue contains arbitrarily localized functions, which is consistent with the asymptotic behavior of both $\beta$ and $\alpha_{2,4}^{-1}$ as $\lambda$ approaches zero \cite{ammari2023quantum}. In Figure~\ref{fig:eigvals_and_coeffs}, we see that $\beta$ traverses a larger interval than $\alpha_{2,4}^{-1}$, and is especially better at distinguishing the first few most localized eigenfunctions. Finally, as expected, Figure~\ref{fig:3D_views} indicates that the most localized eigenfunctions localize predominantly near high-curvature points of $\partial\Omega$.
\begin{figure}
    \centering
    \includegraphics[width=0.49\linewidth]{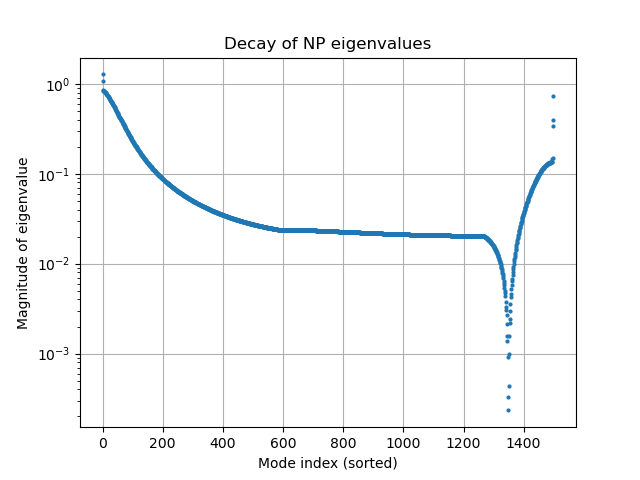}
    \includegraphics[width=0.5\linewidth]{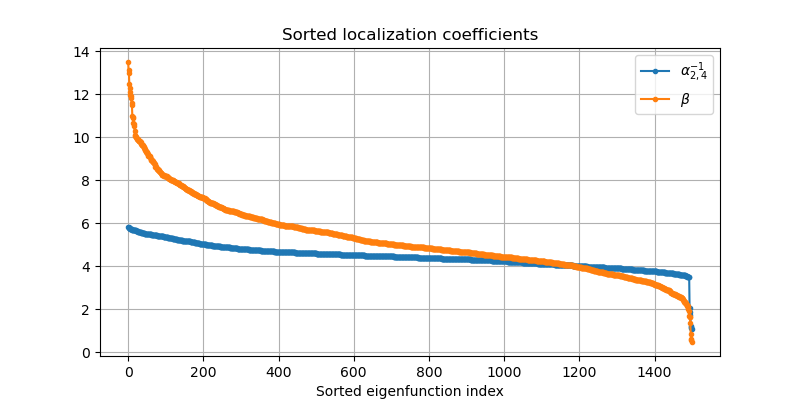}\\
        \includegraphics[width=0.49\linewidth]{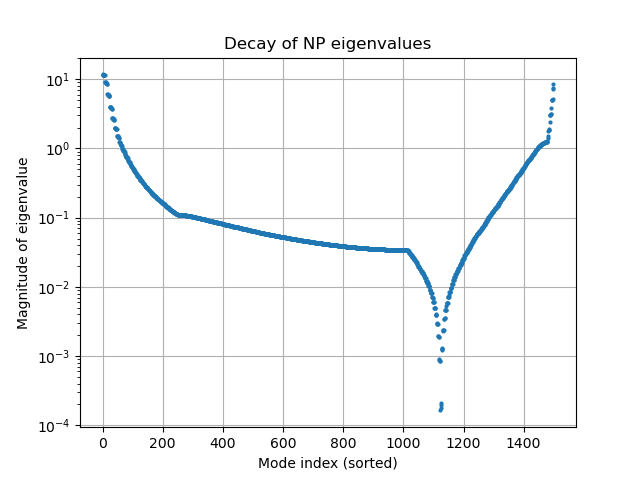}
    \includegraphics[width=0.5\linewidth]{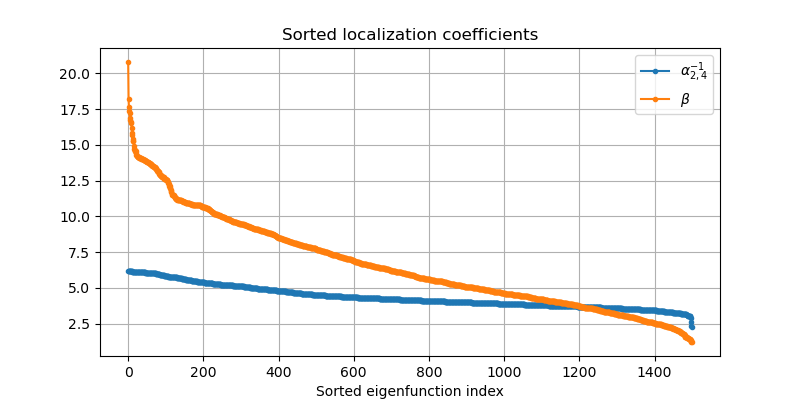}\\
    \includegraphics[width=0.49\linewidth]{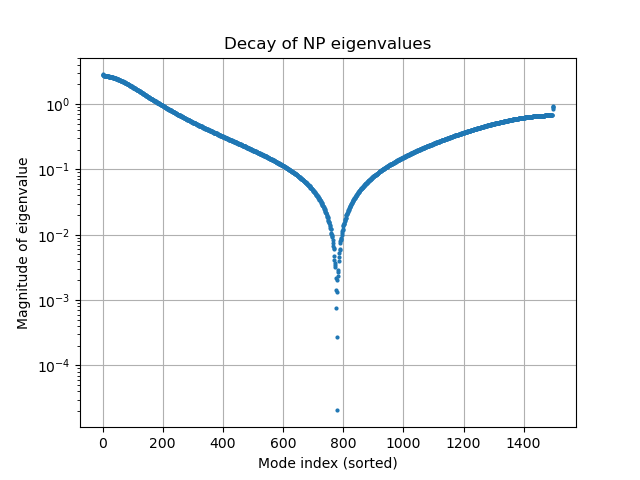}
    \includegraphics[width=0.5\linewidth]{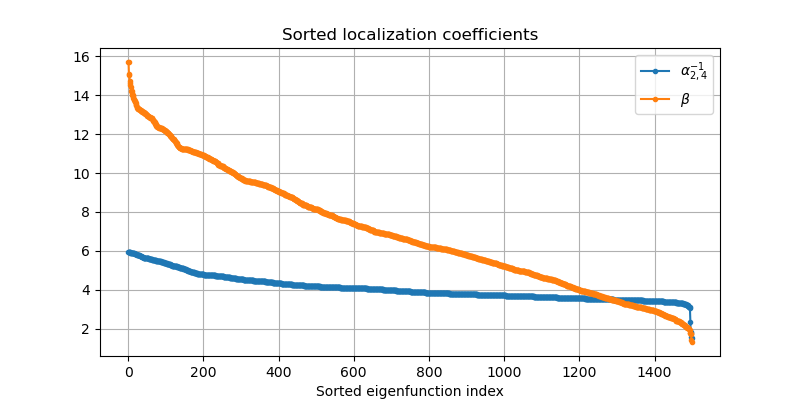}
    \caption{Eigenvalues and localization coefficients for curves 1, 2, and 3 from Figure~\ref{fig:curves}.}
    \label{fig:eigvals_and_coeffs}
\end{figure}
\begin{figure}
    \centering
    \includegraphics[width=0.49\linewidth]{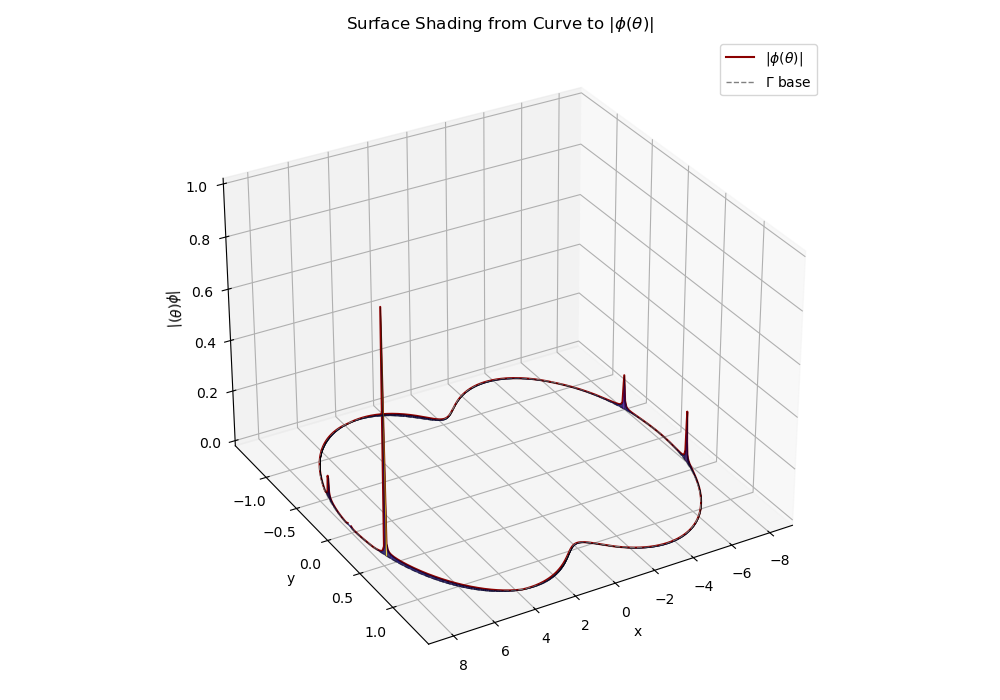}
        \includegraphics[width=0.49\linewidth]{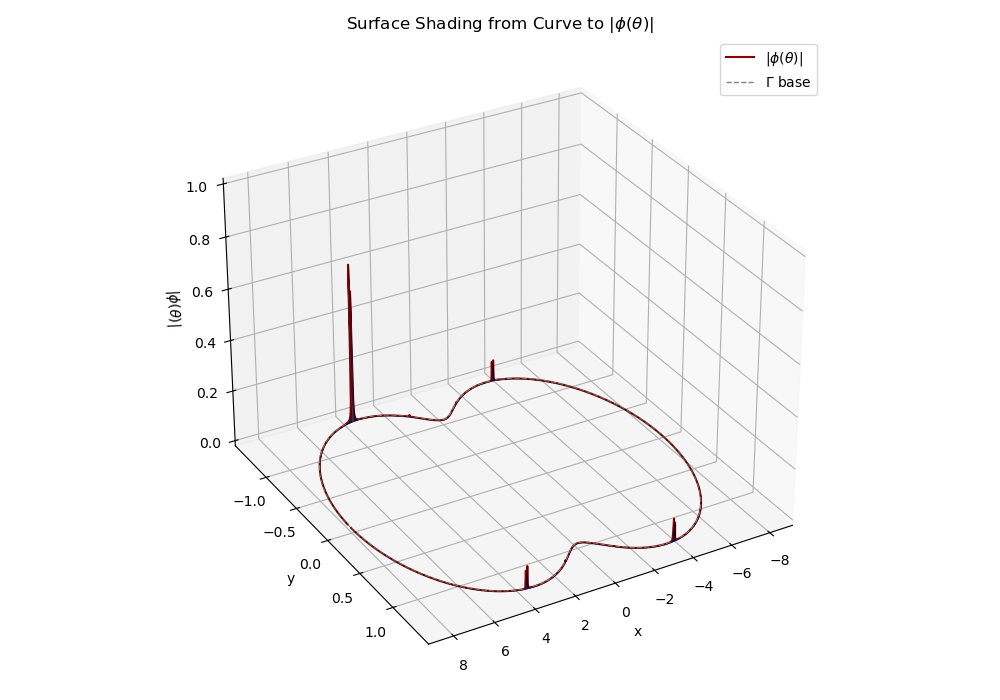}
    \includegraphics[width=0.49\linewidth]{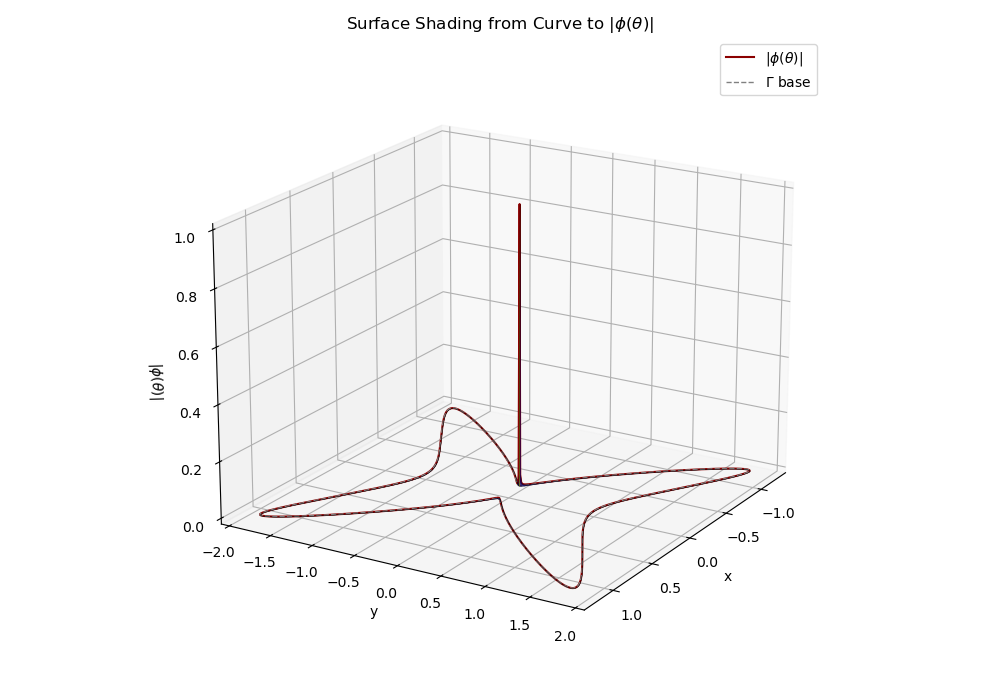}
        \includegraphics[width=0.49\linewidth]{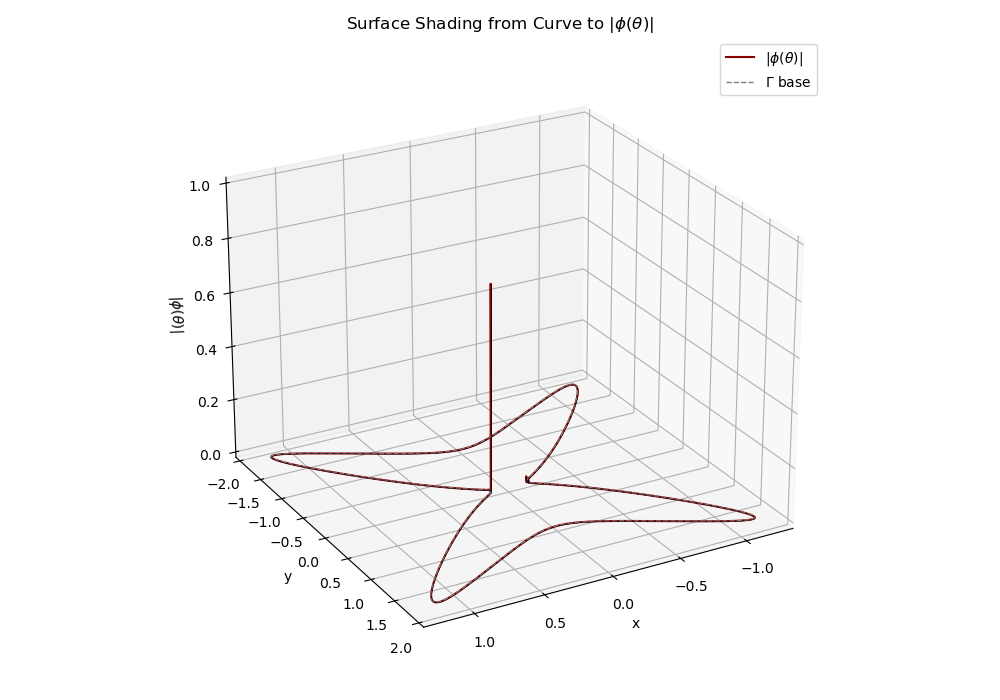}
         \includegraphics[width=0.49\linewidth]{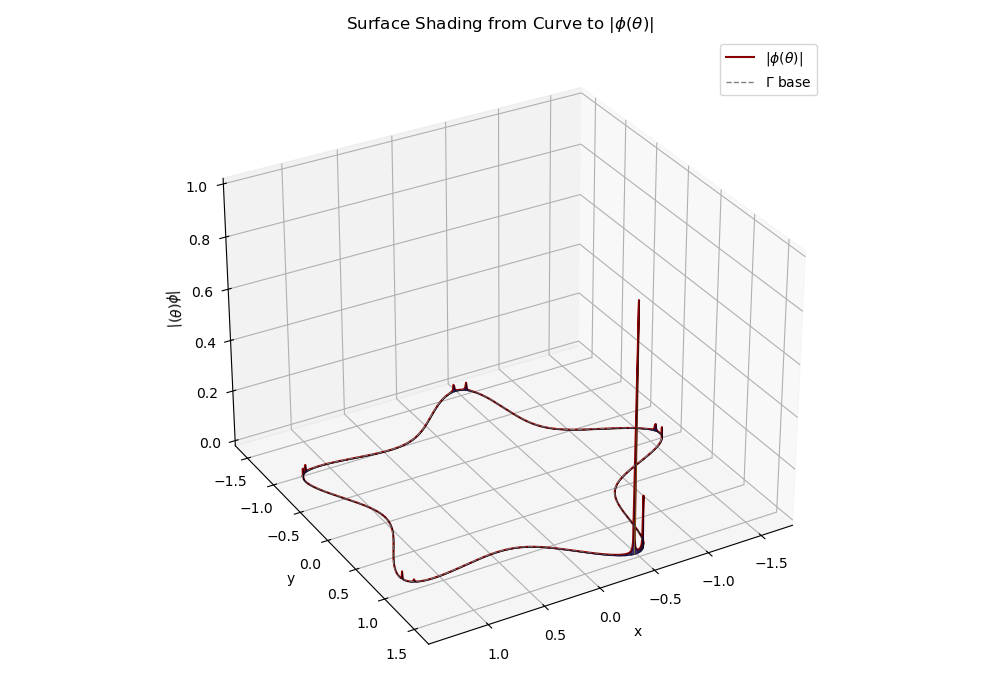}
        \includegraphics[width=0.49\linewidth]{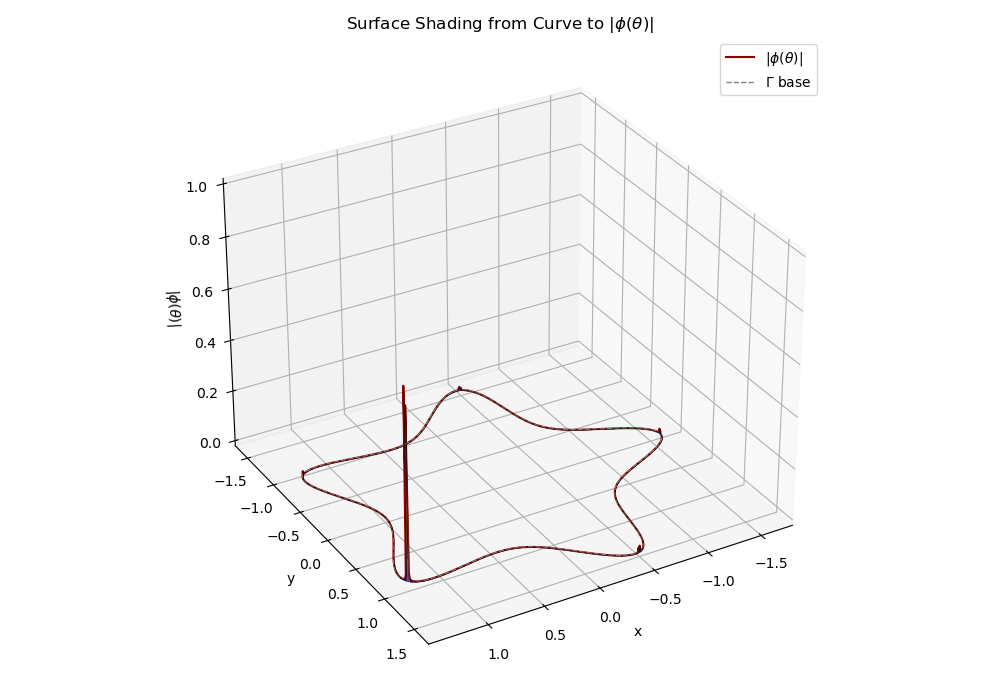}
    \caption{Absolute value distribution of the most localized eigenfunctions. Left column: according to $\alpha_{2,4}^{-1}$, right column: according to $\beta$.}
    \label{fig:3D_views}
\end{figure}

\subsection{Distance-to-boundary measure of localization in dimension two}\label{sec:dtb2D}
Figures~\ref{fig:2D_1} and~\ref{fig:recbum} show a two-dimensional domain $\Omega$ and a sequence of two-bump probability densities $u_d$ on $\Omega$ parametrized by the distance $d$ between the bump peaks. To account for the boundary effect, we choose to quantify the localization of $u_d$ in terms of the optimal transport cost $W_{\rm dist}(\mu_{u_d},|\Omega|^{-1}\lambda)$ with the transport cost function
\[
c(x,y)={\rm dist}(x,\partial\Omega) + {\rm dist}(y,\partial\Omega),\quad x,y\in\Omega.
\]
As shown in Figure~\ref{fig:W2cost2D}, this measure of localization has the desired strictly decaying behavior with increasing $d$ in spite of the immediate proximity of a bump to the boundary $\partial\Omega$ for large values of $d$.
\begin{figure}
    \centering
    \includegraphics[width=0.6\linewidth]{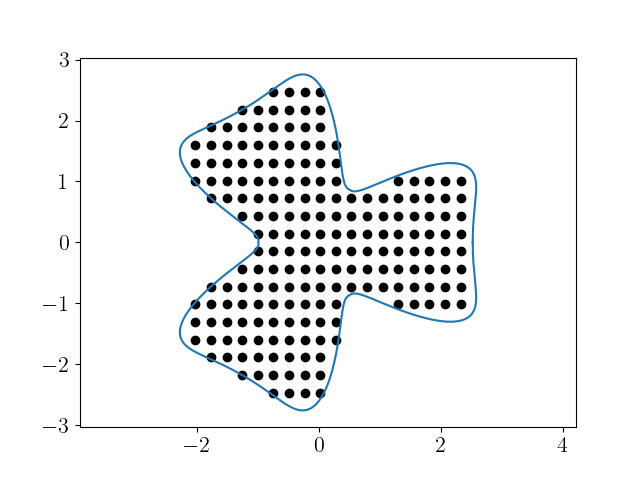}
    \caption{A two-dimensional domain and a discretizing point grid for the example of Section~\ref{sec:dtb2D}.}
    \label{fig:2D_1}
\end{figure}
\begin{figure}
    \centering
    \includegraphics[width=0.47\linewidth]{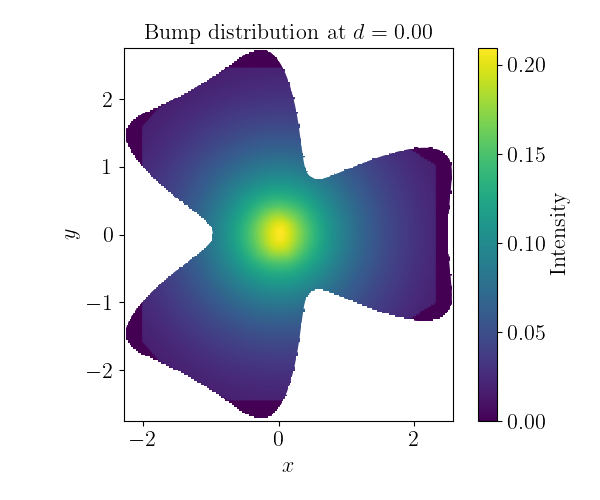}
    \includegraphics[width=0.47\linewidth]{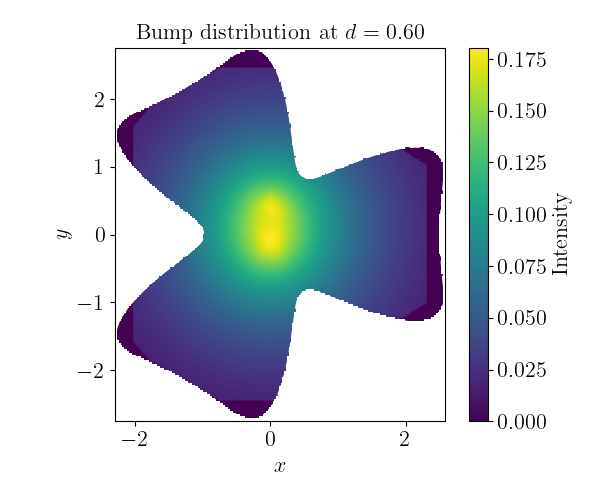}
    \includegraphics[width=0.47\linewidth]{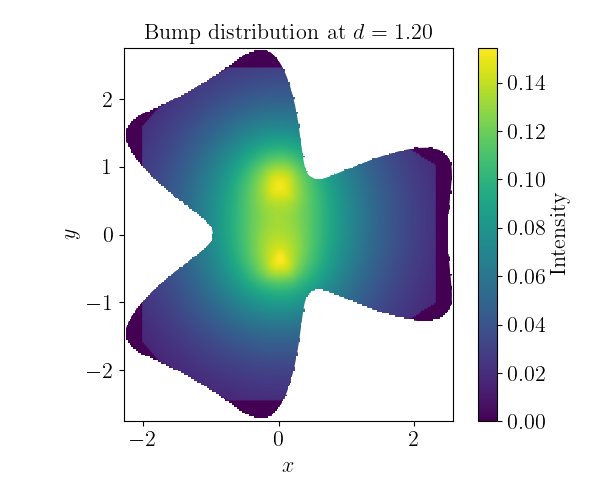}
    \includegraphics[width=0.47\linewidth]{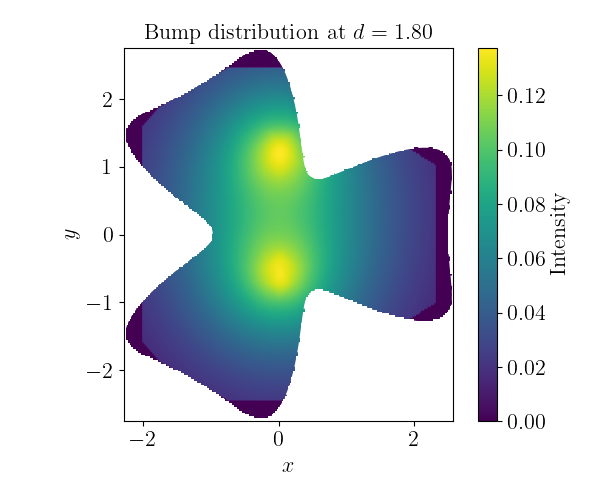}
    \includegraphics[width=0.47\linewidth]{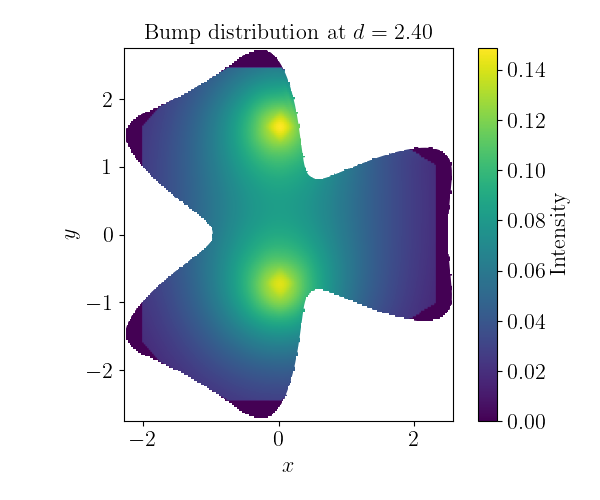}
    \includegraphics[width=0.47\linewidth]{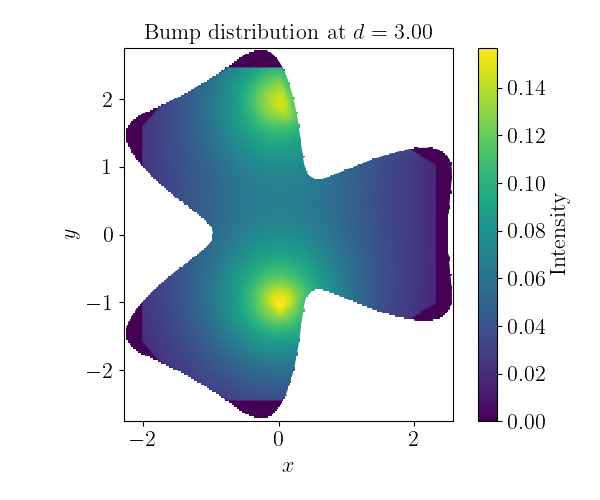}
    \caption{Receding bumps with distance $d$ for the example of Section~\ref{sec:dtb2D}.}
    \label{fig:recbum}
\end{figure}
\begin{figure}
    \centering
    \includegraphics[width=0.6\linewidth]{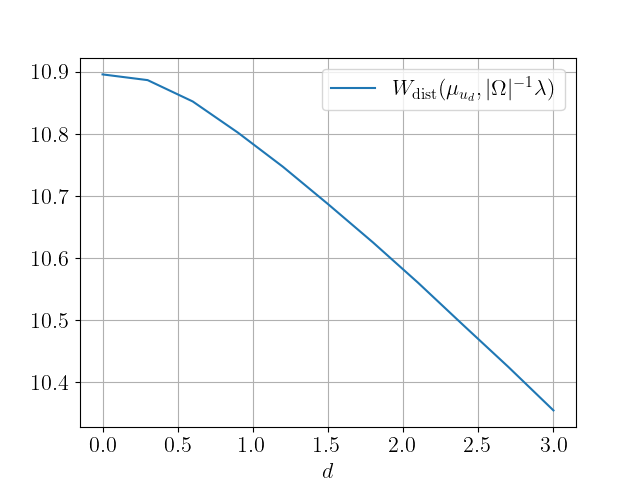}
    \caption{Distance-to-boundary measure of localization of densities from Figure~\ref{fig:recbum}.}
    \label{fig:W2cost2D}
\end{figure}

\section{Conclusion and outlook}\label{sec:conclusion}
After identifying limitations in existing approaches for quantifying the localization of functions, we introduced a new localization coefficient $\beta(\mu_u, \Omega)$ for measures $\mu_u$ with probability densities $u$ supported on $\Omega \subseteq \mathbf{R}^d$. Our method leverages optimal transport, specifically the Wasserstein-2 distance between $\mu_u$ and the normalized Lebesgue measure $|\Omega|^{-1}\lambda$ on $\Omega$. Numerical experiments demonstrate that $\beta(\mu_u, \Omega)$ outperforms traditional localization measures based on Lebesgue norms and mass concentration comparisons. Notably, we proved that in one dimension, $\beta(\mu_u, \Omega)$ coincides with the homogeneous Sobolev norm $\|u - |\Omega|^{-1}\|_{\dot{H}^{-1}(\Omega)}$, enabling more efficient computation in that setting. In addition, we observed and analyzed a boundary effect that can distort localization measurements via $\beta(\mu_u, \Omega)$, and we addressed this phenomenon explicitly in one and two spatial dimensions. A natural direction for future work is to improve estimates connecting the Wasserstein-2 distance and the homogeneous Sobolev norm in higher dimensions. We also anticipate that our new localization framework will facilitate advances in the design of PDE-governed and localization-sensitive physical phenomena, such as photonic nanojets~\cite{2023-phase-only_PNJ,2022-PNJ1}.

\section*{Acknowledgments}
This work was funded by the Villum Foundation research grant no. 58857. We thank Andreas Bacher Hannah of the Technical University of Denmark for his contribution to the numerical computation of eigenfunctions of the Neumann-Poincaré operator.

\bibliographystyle{abbrv}
\bibliography{references}

\end{document}